\DeclareMathOperator{\pv}{\text{p.v.}}
\DeclareMathOperator{\calK}{\mathcal{K}}
\newcommand{\fzerone}{\mathcal{F}^{0,1}_{0}}
\newcommand{\foneone}{\mathcal{F}^{1,1}_{0}}
\newcommand{\foneonenu}{\mathcal{F}^{1,1}_{\nu}}
\newcommand{\fzeronenu}{\mathcal{F}^{0,1}_{\nu}}
\newcommand{\fsonenu}{\mathcal{F}^{s,1}_{\nu}}
\newcommand{\ftwonenu}{\mathcal{F}^{2,1}_{\nu}}
\newcommand{\Hdot}{\dot{H}^{1/2}_\nu}
\newcommand{\fonehalfonenu}{\mathcal{F}^{1/2,1}_{\nu}}
\newcommand{\fthreehalvesonenu}{\mathcal{F}^{3/2,1}_{\nu}}
\title{Global Results for the Inhomogeneous Muskat Problem}
\author{Neel Patel and Nikhil Shankar}
\date{August 1, 2021}
\begin{document}
\maketitle

\begin{abstract}

The inhomogeneous Muskat problem models the dynamics of an interface between two fluids of differing characteristics inside a non-uniform porous medium. We consider the case of a porous media with a permeability jump across a horizontal boundary away from an interface between two fluids of different viscosities and densities. For initial data of explicit medium size, depending on the characteristics of the fluids and porous media, we will prove the global existence and uniqueness of a solution which is instantly analytic and decays in time to the flat interface.
\end{abstract}

\setcounter{tocdepth}{1}

\section{Introduction}
The inhomogeneous Muskat problem models the dynamics of two incompressible, immiscible fluids in a non-uniform, porous medium. This scenario occurs naturally when, for example, oil and water flows meet in a sand and loam media. The physical principle governing the porous media flow is Darcy's law \cite{D}, given here in the two dimensional setting:
\begin{equation}\label{darcylaw}
  \frac{\mu}{\kappa} u = -\nabla p - g \begin{bmatrix} 0 \\ \rho \end{bmatrix}
\end{equation}
where $u(x,t)$ is the fluid velocity, $\mu(x,t)$ is the fluid viscosity, $\kappa(x,t)$ is the permeability of the porous media, $p(x,t)$ is the pressure, $g$ is the gravitational constant and $\rho(x,t)$ is the fluid density. The incompressibility condition in each fluid domain is given by $\nabla \cdot u = 0$.

Given non-intersecting soil and fluid interfaces, we divide our domain into three time-dependent disjoint open regions $D_{i}(t)$ such that $D_{3}(t) = D_{3}$ is unchanging.
\begin{figure}[h]
\begin{center}
    \begin{overpic}[width=0.5\textwidth]{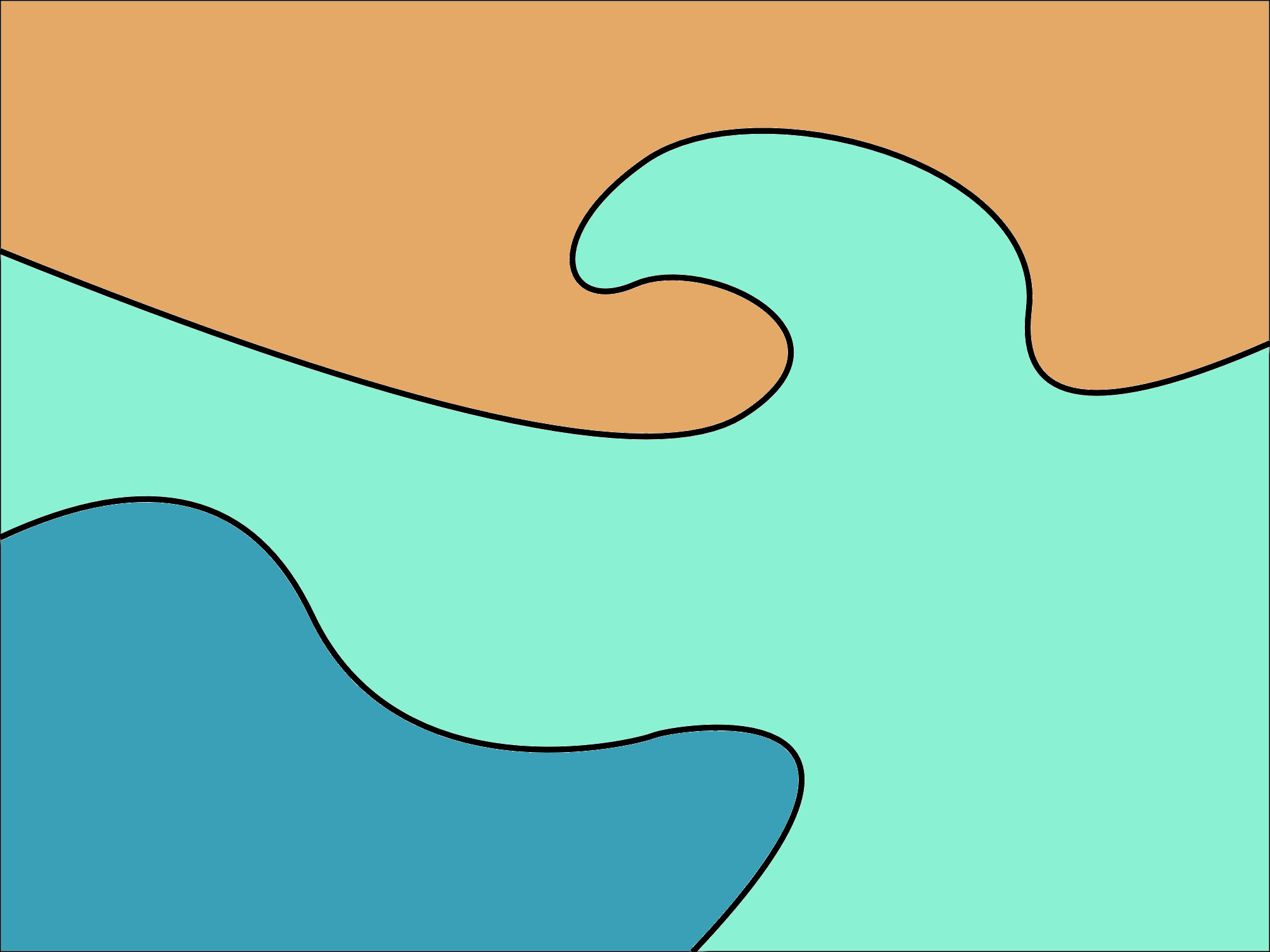}
        \put (15, 60) {\huge$\displaystyle D_1(t)$}
        \put (65, 25) {\huge$\displaystyle D_2(t)$}
        \put (10, 10)  {\huge$\displaystyle D_3$}
        \put (40, 35) {$z(\alpha, t)$}
        \put (63, 5)  {$h(\alpha)$}
    \end{overpic}
\caption{Inhomogenous Muskat Problem.}
\end{center}
\end{figure}

Denoting the smooth, simple boundaries as $\partial D_{i}(t)$ for $i=1,2,3$,
$$D_{1}(t)\cup D_{2}(t)\cup D_{3}\cup \partial D_{1}(t) \cup \partial D_{3} = \mathbb{R}^{2}.$$ 
The two fluids occupy domains $D_{1}(t)$ and $D_{2}(t)\cup D_{3}$ respectively, and the two different soils occupy domains $D_{1}(t)\cup D_{2}(t)$ and $D_{3}$:
\[
    (\mu, \rho)(x,t) = 
    \begin{cases}
        (\mu_1, \rho_1) & \text{if } x \in D_1(t)\\
        (\mu_2, \rho_2) & \text{if } x \in D_2(t) \cup D_3
    \end{cases}
\]
and
\[
    \kappa(x) =
    \begin{cases}
    \kappa_1 & \text{if } x \in  D_1(t) \cup D_2(t)\\
    \kappa_2 & \text{if } x \in D_3
    \end{cases}
\]
We will study the evolution of the fluid interface $\partial D_{1}(t)$. This setting is a variation of the classical Muskat problem in which the porous medium is uniform and the permeability constant is often normalized to $\kappa =1$. Note that in the case of uniform permeability, the distinction between the regions $D_{2}(t)$ and $D_{3}$ vanishes.


We obtain a contour equation for the fluid interface from \eqref{darcylaw} and the incompressibility condition on the fluid velocity by first parametrizing the fluid interface $\partial D_{1}(t)$ as
\[
    z(\alpha,t) = (z_1(\alpha,t), z_2(\alpha,t)) \qquad \text{for } \alpha\in \R,\, t \in \R_{\geq 0},
\]
and the soil interface $\partial D_{3}$ as
\[
    h(\alpha) = (h_1(\alpha), h_2(\alpha)) \qquad \text{for } \alpha \in \R
\]
as depicted in Figure 1. We assume that $z(\alpha, 0) \neq h(\alpha)$ for all $\alpha\in\mathbb{R}$.

Darcy's law implies the vorticity ($\omega = \nabla \times v$) can only be supported on the boundaries,
\[
    \omega(x, t) = \omega_1(\alpha, t) \delta(x-z(\alpha, t)) + \omega_2(\alpha, t) \delta(x-h(\alpha)).
\]
The Biot-Savart law gives a solution for the fluid velocity in terms of the vorticity
\begin{align*}
    u(x,t) &= BR(\omega_1, z)(x, t) + BR(\omega_2, h)(x,t)\\
    &\coloneqq \frac{1}{2\pi} \pv \int_\R \frac{(x-z(\beta, t))^\perp}{\abs{x-z(\beta, t)}^2}\, \omega_1(\beta, t)\; d\beta + \frac{1}{2\pi} \pv \int_\R \frac{(x - h(\beta))^\perp}{\abs{x-h(\beta)}^2}\, \omega_2(\beta, t)\; d\beta
\end{align*}
where $BR$ stands for the Birkoff-Rott integral defined above. Taking limits in the normal direction to the boundaries, one obtains
\begin{align}
    \partial_t z(\alpha, t) &= BR(\omega_1, z)(z(\alpha, t),t) + BR(\omega_2, h)(z(\alpha, t),t) + c(\alpha, t)\ \partial_\alpha z(\alpha, t), \label{zevolution}\\
    \omega_1(\alpha, t) &= 2 A_\mu (BR(\omega_1, z)(z(\alpha, t), t) + BR(\omega_2, h)(z(\alpha, t),t))\cdot \partial_\alpha z(\alpha, t) - 2A_\rho\, \partial_\alpha z_2(\alpha, t),\label{omega1z}\\
    \omega_2(\alpha, t) &= -2A_\kappa(BR(\omega_1, z)(h(\alpha), t) + BR(\omega_2, h)(h(\alpha), t))\cdot \partial_\alpha h(\alpha)\label{omega2z},
\end{align}
where the constants are given by
\begin{equation}\label{constants}
    A_\kappa = \frac{\kappa_1 - \kappa_2}{\kappa_1 + \kappa_2}, \qquad  A_\mu = \frac{\mu_1 - \mu_2}{\mu_1 + \mu_2}, \qquad A_\rho = -\kappa_1 \frac{\rho_1 - \rho_2}{\mu_1 + \mu_2}g.
\end{equation}
In \eqref{zevolution}, the tangential term $c(\alpha,t)\partial_{\alpha}z(\alpha,t)$ is determined by the choice of parametrization of the curve and vanishes when considering the velocity normal to the interface. See \cite{P-C} for more details. If there is no jump in permeability, the constant $A_{\kappa} = 0$ and the $\omega_{2}$ terms disappear from the evolution equation for $z(\alpha,t)$.

The question of well-posedness of a fluid-fluid interface in porous media has been well-studied. The local well-posedness depends on the system initially satisfying the Rayleigh-Taylor condition, which requires the jump of the gradient of the pressure in the normal direction to the interface to be strictly positive. A system satisfying the Rayleigh-Taylor condition is said to be in the stable regime, see e.g. \cite{Ambrose}. In the case that there is a density jump across the interface, $A_{\rho} \neq 0$, the Rayleigh-Taylor condition requires that the denser fluid lay below the interface, meaning $\rho_{2}>\rho_{1}$. The scaling invariance of the Muskat problem gives the criticality of $H^{1+\frac{d}{2}}$ regularity, where $d$ is the dimension of the interface. Similarly, it can be seen that $W^{1,\infty}$, $C^{1}$ and $\foneone$ are also all scale invariant. (For the definition of $\foneone$, see \eqref{Fspnu} below.)

The stable regime with uniform permeability $A_{\kappa}=0$ has been extensively studied; see for example, \cite{AbelsMatioc}, \cite{AlazardLazar}, \cite{AlazardNguyen3}, \cite{AlazardNguyen1}, \cite{AlazardNguyen2}, \cite{CCFGL}, \cite{CGS}, \cite{CGSV}, \cite{CCG2}, \cite{CCG1}, \cite{CG1}, \cite{GancedoLazar}, \cite{GGNP}, \cite{Matioc2}, \cite{matioc}, \cite{NguyenGlobal}, \cite{Nguyen}, \cite{NguyenP}, etc. and references therein. Of particular interest to this paper are results showing global well-posedness for initial data of \textit{medium size} in the case of uniform permeability. Previously, global in time results are known in both two and three dimensions for systems with uniform permeability for medium size initial data without a viscosity jump \cite{CCGRS}, \cite{CCGS}, \cite{PS} and with a viscosity jump \cite{GGPS} in the case of an infinite graph interface by using the norms \eqref{Fspnu}. Under the effects of surface tension, the stability of medium size perturbations of a gravity unstable bubble interface has also been proven in \cite{gancedo2020global}. A modulus of continuity approach in \cite{Cameron1} and \cite{Cameron2} gives a different medium size condition in both 2D and 3D without viscosity jump.

As discussed earlier, in this paper, we consider the non-uniform permeability setting, also called the inhomogeneous Muskat problem. In this setting, for a horizontal permeability jump boundary and without viscosity jump, graph interface solutions were shown to be locally well-posed using energy estimates in Sobolev spaces and there exists interfaces starting in the stable regime that become unstable in finite time \cite{BCG}. A class of graph solutions exhibiting this turning behavior was also demonstrated by a computer-assisted proof in \cite{JaviGraneroInhom}. In the same setting, \cite{inhomShkoller} demonstrated global existence and decay to the flat fluid interface for small initial data in $H^{2}$. For a general interface curve and with a viscosity jump, local well-posedness was shown in \cite{P-C} in Sobolev spaces. The existence of splash singularities was later shown in \cite{CordobaTania}, although splat singularities were ruled out. This paper will address the problem of global well-posedness in the critical regularity $\foneone$ for medium sized initial data in the regime that also has a viscosity jump. Moreover, the estimates proven in this paper will imply the interface is instantly analytic and decay to the flat solution as a corollary. The authors would like to note that while writing the final version of this paper, a concurrent result \cite{GraneroAlonso} in the regime without a viscosity jump, demonstrates precise conditions for $H^{3}$ solutions to show decay in the Lipschitz norm and further establishes a global existence and decay result for Lipschitz solutions with small initial data.

We will consider the case of a graphical fluid interface $z(\alpha,t) = (\alpha, f(\alpha,t))$ and a fixed horizontal permeability jump interface $h(\alpha) = (\alpha, -h_2)$ for $h_2 > 0$ under the assumption $\norm{f(\alpha, 0)}_{L^\infty}  < h_2$ as in \cite{BCG}, although we will allow for a viscosity jump. 
Setting
\begin{align*}
    \Delta_\beta f(\alpha) &= \frac{f(\alpha) - f(\alpha - \beta)}{\beta},
\end{align*}
the choice of a graph interface determines the tangential constant $c(\alpha,t)$ and turns the system \eqref{zevolution}-\eqref{omega1z}-\eqref{omega2z} into
\begin{align}
\label{eq: interface}
    \partial_t f(\alpha) = \frac{1}{2\pi}(I_1(\alpha) + I_2(\alpha) + I_3(\alpha) + I_4(\alpha))
\end{align}
for
\begin{align}
\label{eq: I_1}
    I_1(\alpha) &= \pv \int_\R \frac{1}{1 + \Delta_\beta f(\alpha)^2} \frac{\omega_1(\alpha - \beta)}{\beta} \; d\beta,\\
\label{eq: I_2}
    I_2(\alpha) &= \pv \int_\R \frac{ \partial_\alpha f(\alpha)\Delta_\beta f(\alpha)  }{1 + \Delta_\beta f(\alpha)^2} \frac{\omega_1(\alpha - \beta)}{\beta} \; d\beta,\\
\label{eq: I_3}
    I_3(\alpha) &=  \pv \int_\R \frac{\beta}{\beta^2 + (f(\alpha) + h_2)^2}\ \omega_2(\alpha - \beta)\; d\beta,\\
\label{eq: I_4}
    I_4(\alpha) &= \pv \int_\R \frac{\partial_\alpha f(\alpha) (f(\alpha) + h_2)}{\beta^2 + (f(\alpha) + h_2)^2}\ \omega_2(\alpha - \beta)\; d\beta,
\end{align}
in which
\begin{align}
\begin{split}
    \omega_1(\alpha) &= \frac{A_\mu}{\pi} \pv \int_\R \frac{\partial_\alpha f(\alpha) - \Delta_\beta f(\alpha) }{1 + \Delta_\beta f(\alpha)^2} \frac{\omega_1(\alpha - \beta)}{\beta}\; d\beta \\
    &\hspace{2.5em}+ \frac{A_\mu}{\pi} \pv \int_{\R} \frac{ \beta \partial_\alpha f(\alpha) -(f(\alpha) +h_2)  }{\beta^2 + (f(\alpha)+h_2)^2} \omega_2(\alpha - \beta)\; d\beta -2A_\rho \partial_\alpha f(\alpha),
    \end{split} \label{omega1equation}\\
    \omega_2(\alpha) &= -\frac{A_\kappa}{\pi} \pv \int_\R  \frac{f(\alpha - \beta) + h_2}{\beta^2 + (f(\alpha - \beta) + h_2)^2}\ \omega_1(\alpha - \beta)\; d\beta.\label{omega2equation}
\end{align}
Defining $\partial_{\alpha}\Omega_{i} = \omega_{i}$, it can be derived that
\begin{align}
    \begin{split}
        \Omega_1(\alpha) &= -\frac{A_\mu}{\pi} \int_\R \frac{\Delta_\beta f(\alpha) - \partial_\alpha f(\alpha - \beta)}{1 + \Delta_\beta f(\alpha)^2} \frac{\Omega_1(\alpha - \beta)}{\beta}\; d\beta \\
        &\hspace{2.5em}- \frac{A_\mu}{\pi} \int_\R \frac{f(\alpha) + h_2}{\beta^2 + (f(\alpha) + h_2)^2}\Omega_2(\alpha - \beta) \; d\beta - 2A_\rho f(\alpha)\label{Omega1}
    \end{split}\\
    \Omega_2(\alpha) &= -\frac{A_\kappa}{\pi} \int_\R \frac{f(\alpha - \beta) + h_2 +\beta \partial_\alpha f(\alpha - \beta)}{\beta^2 + (f(\alpha - \beta) + h_2)^2} \Omega_1(\alpha - \beta)\; d\beta. \label{Omega2}
\end{align}
The equations \eqref{eq: interface}, \eqref{omega1equation}, and \eqref{omega2equation} give a coupled system for the evolution of fluid-fluid graph interface $f(\alpha,t)$.
\section{Main Results}
To study the evolution of the interface, we adopt the weighted Fourier norms defined as follows. For a function $g:\mathbb{R}^{d} \times \R_{\geq 0} \rightarrow \mathbb{R}$, and for $s > -d$ define the norm 
\begin{equation}\label{Fspnu}
\|g\|_{\mathcal{F}^{s,p}_{\nu}} (t) = \norm{ e^{\nu t \abs{\xi}} \, |\xi|^{s}\, \hat{g}(\xi,t) }_{L^p} = \Big(\int_{\mathbb{R}^{d}} e^{\nu t  p|\xi|} |\xi|^{sp} |\hat{g}(\xi, t)|^{p} d\xi\Big)^{\frac{1}{p}}
\end{equation}
where $\hat g$ is the Fourier transform of $g$ in the spatial variable
\[
    \hat g(\xi, t) = \calF(g(\,\cdot\, , t))(\xi) = \int_{\R^d} g(x, t) e^{-ix\cdot \xi}\; dx.
\]
Let $\calF^{s,p}_{\nu}$ be the space of all functions with finite $\norm{\, \cdot\, }_{\calF^{s,p}_{\nu}}$ norm.

Let $\theta$ be the constant defined in \eqref{infconst} and observe $\theta > 0$ because $\abs{A_\kappa} < 1$ in the setting of \eqref{darcylaw}. Let $\sigma_{s} = \sigma_{s}(\|f_{0}\|_{\fzerone}, \|f_{0}\|_{\foneone})$, $s=0,1,2$ be continuous functions defined in \eqref{sigma0}, \eqref{sigma1}, \eqref{sigma2}. Note that $\sigma_{s}(0,0) = 0$. Since the $\sigma_{s}$ are continuous, we define the constants $ k_{0}(|A_{\mu}|,|A_{\kappa}|)  < h_{2}$ and $ k_{1}(|A_{\mu}|,|A_{\kappa}|)  < 1$ such that $$\theta  -  \sigma_{s}(k_{0}(|A_{\mu}|,|A_{\kappa}|),k_{1}(|A_{\mu}|,|A_{\kappa}|)) > 0$$ for $s= 0,1,2$.
\begin{theorem}\label{MainTheorem}
    Suppose $f_0 \in L^{2}\cap \foneone$ such that $\|f_{0}\|_{\fzerone} < k_{0}(|A_{\mu}|,|A_{\kappa}|)$  and $\|f_{0}\|_{\foneone} < k_{1}(|A_{\mu}|,|A_{\kappa}|)$ hold. Then there exists a unique solution $f \in L^{\infty}([0, T]; L^{2}\cap\foneone ) \cap L^{1}([0, T]; \mathcal{F}^{2,1}_{0})$ for all $T>0$ and a constant $\nu > 0$ satisfying for $s=0,1$
\begin{equation}\label{maininequality}
\norm{f}_{\mathcal{F}^{s,1}_{\nu}}(t) +(A_\rho\theta  - A_{\rho} \sigma_{s} -\nu)\int_{0}^{t} \norm{f}_{\mathcal{F}^{s+1,1}_{\nu}}(s)\; ds \leq \|f_{0}\|_{\mathcal{F}^{s,1}_{0}}
\end{equation}
and
\begin{equation}\label{L2inequality}
  \norm{f}_{L^2_\nu}^2 (t) \leq \norm{f_0}_{L^2}^2 \cdot \exp(R(\norm{f_0}_{\fzerone}, \norm{f_0}_{\foneone},t))
\end{equation}
for any $t \geq 0$ for a positive function $R$ that is bounded for $\|f_{0}\|_{\fzerone} < k_{0}(|A_{\mu}|,|A_{\kappa}|)$  and $\|f_{0}\|_{\foneone} < k_{1}(|A_{\mu}|,|A_{\kappa}|)$ and $t\geq 0$.
\end{theorem}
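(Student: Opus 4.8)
The plan is to establish the two estimates \eqref{maininequality} and \eqref{L2inequality} first as a priori bounds for sufficiently smooth, spatially decaying solutions, and then to upgrade them to the stated existence-and-uniqueness statement through a regularization/compactness scheme together with a difference estimate.

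\textbf{Step 1: reduction to a scalar parabolic equation.} First I would decouple the $(\omega_1,\omega_2)$ system. Substituting \eqref{omega2equation} into \eqref{omega1equation} and expanding the resulting map as a Neumann series --- using the geometric expansions $\frac{1}{1+\Delta_\beta f(\alpha)^2}=\sum_{n\geq 0}(-1)^n\Delta_\beta f(\alpha)^{2n}$ and $\frac{1}{\beta^2+(f+h_2)^2}=\frac{1}{\beta^2+h_2^2}\sum_{n\geq 0}(-1)^n\big(\frac{2h_2 f+f^2}{\beta^2+h_2^2}\big)^n$, which converge on the Wiener-type algebra $\fzeronenu$ and its derivative analogues once $\norm{f}_{\foneonenu}<1$ and $\norm{f}_{L^\infty}<h_2$ --- yields $\omega_1=-2A_\rho\,\partial_\alpha f+\mathcal{R}[f]$, where $\mathcal{R}[f]$ collects every contribution carrying a factor $A_\mu$ or $A_\kappa$ together with at least one extra power of $f$ (or $\partial_\alpha f$, $\Delta_\beta f$); in particular $\omega_1$ is proportional to $A_\rho$ throughout. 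Feeding this back into \eqref{eq: interface} and evaluating, via the elementary transforms $\calF\!\big[\tfrac{a}{x^2+a^2}\big]=\pi e^{-a\abs{\xi}}$ and $\calF\!\big[\tfrac{x}{x^2+a^2}\big]=-i\pi\sgn(\xi)e^{-a\abs{\xi}}$ for $a>0$, the multipliers of $I_1$ and of $I_3$ (the latter through the linear part of $\omega_2$) generated by the $-2A_\rho\partial_\alpha f$ part of $\omega_1$, the interface equation becomes on the Fourier side
\[
    \partial_t\widehat f(\xi)=-A_\rho\,m(\xi)\,\abs{\xi}\,\widehat f(\xi)+\widehat{N[f]}(\xi),
\]
where $m(\xi)=1-A_\kappa e^{-2h_2\abs{\xi}}$ is the symbol produced by the $\omega_2$-feedback through the fixed interface $h(\alpha)=(\alpha,-h_2)$, so that $\theta=\inf_\xi m(\xi)\geq 1-\abs{A_\kappa}>0$ is the constant of \eqref{infconst}, and $N[f]$ is the full nonlinear (quadratic-and-higher) remainder, carrying an overall factor $A_\rho$.

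\textbf{Step 2: the $\fsonenu$ estimates.} For $s=0,1,2$, multiply the displayed equation by $e^{\nu t\abs{\xi}}\abs{\xi}^s$, use the pointwise identity $\partial_t\abs{\widehat f}=\mathrm{Re}\,\big(\overline{\sgn\widehat f}\,\partial_t\widehat f\big)$ and $\partial_t e^{\nu t\abs{\xi}}=\nu\abs{\xi}e^{\nu t\abs{\xi}}$, and integrate in $\xi$; since $m(\xi)\geq\theta$ this gives
\[
    \frac{d}{dt}\norm{f}_{\fsonenu}+(A_\rho\theta-\nu)\norm{f}_{\mathcal{F}^{s+1,1}_\nu}\leq\norm{N[f]}_{\fsonenu}.
\]
The technical core is the nonlinear bound $\norm{N[f]}_{\fsonenu}\leq A_\rho\,\sigma_s\!\big(\norm{f}_{\fzeronenu},\norm{f}_{\foneonenu}\big)\,\norm{f}_{\mathcal{F}^{s+1,1}_\nu}$ with $\sigma_s$ explicit, continuous and vanishing at the origin: applying the convolution/product inequalities for these Gevrey norms (splitting $e^{\nu t\abs{\xi}}$ across convolutions and distributing $\abs{\xi}^{s+1}$ by the triangle inequality) term by term to each summand of $N[f]$ and resumming the geometric series, exactly one derivative is always spent on the highest-frequency factor --- producing $\norm{f}_{\mathcal{F}^{s+1,1}_\nu}$ --- while every remaining factor carries a small constant $\abs{A_\mu}$, $\abs{A_\kappa}$ or a quantity controlled by $\norm{f}_{\foneonenu}$, and all of these are swept into $\sigma_s$; the fixed depth $h_2>0$ supplies the summable weights $e^{-h_2\abs{\xi}},e^{-2h_2\abs{\xi}},\dots$, and this is the point at which $\norm{f_0}_{\fzerone}<h_2$ (which, being nonincreasing, keeps $\norm{f}_{L^\infty}$ below $h_2$ so that $f+h_2>0$) is used. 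Hence $\frac{d}{dt}\norm{f}_{\fsonenu}+(A_\rho\theta-A_\rho\sigma_s-\nu)\norm{f}_{\mathcal{F}^{s+1,1}_\nu}\leq 0$.

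\textbf{Step 3: continuity argument, $L^2$ bound, and existence/uniqueness.} Given the hypotheses and the definition of $k_0,k_1$ through $\theta-\sigma_s(k_0,k_1)>0$, fix $\nu>0$ small enough that $A_\rho\theta-A_\rho\sigma_s(k_0,k_1)-\nu>0$ for $s=0,1,2$; since $\sigma_s$ is monotone in its arguments, a continuity/bootstrap argument shows $\norm{f}_{\fzeronenu}(t)$ and $\norm{f}_{\foneonenu}(t)$ never reach $k_0,k_1$ (they are nonincreasing as long as they stay below), so the differential inequality of Step 2 holds for all $t\geq 0$; integrating gives \eqref{maininequality}, and the $s=1$ case together with $e^{\nu t\abs{\xi}}\geq 1$ gives $f\in L^1([0,T];\mathcal{F}^{2,1}_0)$. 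For \eqref{L2inequality} I would run the $L^2_\nu$ energy identity: the parabolic term has the favorable sign, every nonlinear contribution is bounded --- using the $\foneonenu$ and $L^2_\nu$ control of $f$ --- by a constant (small when $\norm{f}_{\foneonenu}$ is) times $\norm{f}_{L^2_\nu}^2$, and Grönwall, using that $\int_0^\infty\norm{f}_{\foneonenu}\,d\tau\lesssim\norm{f_0}_{\fzerone}$ from the $s=0$ case of \eqref{maininequality}, produces \eqref{L2inequality} with $R$ bounded uniformly in $t$. Existence then follows by applying Steps 1--2 to a regularized system (frequency truncation or a mollified Birkhoff--Rott kernel), extracting bounds uniform in the regularization in $L^\infty([0,T];L^2\cap\foneonenu)\cap L^1([0,T];\ftwonenu)$ and passing to the limit --- the $\ftwonenu$ control furnishing time-equicontinuity --- while uniqueness comes from a difference estimate: if $f,g$ are two solutions with the same data, $f-g$ solves a linear equation of the same parabolic-plus-small structure with coefficients controlled through the $\foneonenu$ norms of $f,g$, whence $\frac{d}{dt}\norm{f-g}_{\fzeronenu}\leq 0$ and $f=g$. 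I expect the nonlinear estimate of Step 2 --- deciding which terms of $N[f]$ are genuinely derivative-order (hence must either enter the symbol $m$ or be absorbed without losing a derivative) versus strictly higher order, and extracting the explicit forms of $\sigma_s$, $k_0$, $k_1$ and $\theta$ --- to be the main obstacle; the remainder is the by-now-standard Wiener-algebra machinery for the critical Muskat problem.
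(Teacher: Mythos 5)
Your Steps 2 and 3 are in the same spirit as the paper's argument (Wiener-algebra estimates giving a differential inequality, continuity argument, mollified system with Picard and a Cauchy-sequence passage to the limit), but Step 1 contains a substantive error in the linearization, and Step 3 contains a second, smaller one.

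\textbf{The linear symbol is wrong.} You write $m(\xi)=1-A_\kappa e^{-2h_2\abs{\xi}}$ and claim $\omega_1=-2A_\rho\,\partial_\alpha f+\mathcal{R}[f]$ with $\mathcal{R}[f]$ carrying every $A_\mu$- or $A_\kappa$-weighted term together with \emph{at least one extra power of $f$}. That is false. In \eqref{omega1equation} the term
\[
\frac{A_\mu}{\pi}\,\pv\!\int_\R \frac{\beta\,\partial_\alpha f(\alpha)-(f(\alpha)+h_2)}{\beta^2+(f(\alpha)+h_2)^2}\,\omega_2(\alpha-\beta)\,d\beta
\]
has a nonzero leading part as $f\to 0$, namely $-\frac{A_\mu}{\pi}\int_\R\frac{h_2}{\beta^2+h_2^2}\omega_2(\alpha-\beta)\,d\beta$, whose Fourier transform is $-A_\mu e^{-h_2\abs{\xi}}\hat\omega_2(\xi)$. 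So the linear part of $\hat\omega_1$ couples back to $\hat\omega_2$ \emph{at zeroth order in $f$}, and since the linear part of $\hat\omega_2$ is $-A_\kappa e^{-h_2\abs{\xi}}\hat\omega_1$, one must solve (i.e.\ resum) the coupled linear system
$L(\hat\omega_1)=-A_\mu e^{-h_2\abs{\xi}}\hat\omega_2-2A_\rho\widehat{\partial_\alpha f}$, $L(\hat\omega_2)=-A_\kappa e^{-h_2\abs{\xi}}\hat\omega_1$, as in \eqref{Omega2linearpart}. This produces the denominator $e^{2h_2\abs{\xi}}-A_\kappa A_\mu$, and the correct linearized multiplier is $1-\dfrac{A_\kappa(1-A_\mu)}{e^{2h_2\abs{\xi}}-A_\kappa A_\mu}$ as in \eqref{interface_neat} and \eqref{infconst}. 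Your $m(\xi)$ agrees with this only when $A_\mu=0$; since the inhomogeneous problem with a viscosity jump is precisely the new regime of this paper, your linearization misses the crucial $A_\mu$-$A_\kappa$ interaction. (In particular $\theta$ is not $1-\abs{A_\kappa}$; it is $\inf_\xi$ of the displayed ratio, which at $\xi=0$ equals $\tfrac{1-A_\kappa}{1-A_\kappa A_\mu}$.)

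\textbf{The uniqueness step is too optimistic.} You claim $\tfrac{d}{dt}\norm{f-g}_{\fzerone}\leq 0$. In fact the correct difference estimate in this setting is of Gr\"onwall type, $\tfrac{d}{dt}\norm{f-g}_{\fzerone}\leq -c_1\norm{f-g}_{\foneone}+c_2\norm{f-g}_{\fzerone}$, and the $c_2$-term cannot be absorbed into the decay: unlike the uniform-permeability Muskat problem, the inhomogeneous kernels $\frac{\beta}{\beta^2+(f+h_2)^2}$ and $\frac{f+h_2}{\beta^2+(f+h_2)^2}$ are smoothing rather than singular, so differences of solutions appear at the level of $\fzerone$ without a gained derivative. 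This still yields uniqueness (via Gr\"onwall) but the structure is genuinely different from what you claim, and this same lack of absorption is exactly what forces the Gr\"onwall step in the Cauchy-in-$\varepsilon$ argument for existence.

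Finally, the $L^2$-analytic energy estimate is more delicate than stated: the nonlinear contributions in the $L^2_\nu$ balance are not simply bounded by small constants times $\norm{f}_{L^2_\nu}^2$; several of them produce $\norm{f}_{\dot H^{1/2}_\nu}^2$-terms that must be split with Young's inequality and partially absorbed into the parabolic gain, and the remaining $\norm{f}_{L^2_\nu}^2$-coefficients involve $\norm{f}_{\mathcal{F}^{3/2,1}_\nu}$ and $\norm{f}_{\mathcal{F}^{2,1}_\nu}$, which are only in $L^1$ in time. So the Gr\"onwall factor genuinely depends on $t$, which is consistent with \eqref{L2inequality} but is glossed over in your outline.
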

\begin{remark}
By \eqref{maininequality} and \eqref{L2inequality}, it can be seen from the exponential weight, that the solution gains analytic regularity instantly in time for all $t>0$.
\end{remark}
Next, to show decay to the flat solution, we will  need the Decay Lemma proved in \cite{GGPS}, which we have restated for our setting below.
\begin{lemma}[Decay Lemma]
\label{lem: decay}
    Suppose $\norm{g}_{\mathcal{F}^{s_{1},1}}(t) \leq C_0$ and
    \[
        \frac{d}{dt} \norm{g}_{\mathcal{F}^{s_{2},1}}(t) \leq -C \norm{g}_{\mathcal{F}^{s_{2}+1,1}}(t)
    \]
    where $s_1 < s_2$. Then
    \[
        \norm{g}_{\mathcal{F}^{s_{2},1}}(t) \lesssim (1+t)^{s_1 - s_2}.
    \]
\end{lemma}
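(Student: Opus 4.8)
\emph{Proof strategy.} The plan is to convert the given differential inequality, which is linear in the higher-order norm, into an autonomous superlinear scalar ODE inequality of the form $y'\le -c\,y^{p}$ with $p>1$, and then apply the elementary comparison principle for such inequalities. The uniform-in-time bound by $C_0$ in the lower norm $\mathcal{F}^{s_1,1}$ is precisely what supplies the superlinear gain, through an interpolation step.

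First I would record the interpolation. Fix $\theta=\tfrac{1}{s_2+1-s_1}\in(0,1)$, chosen so that $s_2=\theta s_1+(1-\theta)(s_2+1)$; this is genuinely an interior exponent since $s_1<s_2$. Working on the Fourier side (with $\nu=0$, so no exponential weight), factoring $|\xi|^{s_2}|\hat g(\xi)|=\big(|\xi|^{s_1}|\hat g(\xi)|\big)^{\theta}\big(|\xi|^{s_2+1}|\hat g(\xi)|\big)^{1-\theta}$ and applying H\"older's inequality with exponents $1/\theta$ and $1/(1-\theta)$ gives $\|g\|_{\mathcal{F}^{s_2,1}}\le\|g\|_{\mathcal{F}^{s_1,1}}^{\theta}\|g\|_{\mathcal{F}^{s_2+1,1}}^{1-\theta}\le C_0^{\theta}\|g\|_{\mathcal{F}^{s_2+1,1}}^{1-\theta}$, hence
\[
\|g\|_{\mathcal{F}^{s_2+1,1}}\ \ge\ C_0^{-\theta/(1-\theta)}\,\|g\|_{\mathcal{F}^{s_2,1}}^{\,1/(1-\theta)}.
\]
Next I would set $y(t)=\|g\|_{\mathcal{F}^{s_2,1}}(t)$ and $p=\tfrac{1}{1-\theta}=1+\tfrac{1}{s_2-s_1}>1$, so the hypothesis becomes $y'\le -\tilde C\,y^{p}$ with $\tilde C=C\,C_0^{-(p-1)}$. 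Where $y>0$ (elsewhere the conclusion is trivial), dividing by $y^{p}$ and integrating yields $y(t)^{1-p}\ge y(0)^{1-p}+(p-1)\tilde C\,t\ge(p-1)\tilde C\,t$, so $y(t)\le\big((p-1)\tilde C\,t\big)^{-1/(p-1)}$ for every $t>0$, independently of the (possibly large) value $y(0)$; since $\tfrac{1}{p-1}=s_2-s_1$, this is a bound of the form $y(t)\le C'\,t^{\,s_1-s_2}$ with $C'$ depending only on $C,C_0,s_1,s_2$.

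Finally I would upgrade $t^{s_1-s_2}$ to $(1+t)^{s_1-s_2}$: since $y'\le -C\|g\|_{\mathcal{F}^{s_2+1,1}}\le 0$, the function $y$ is non-increasing, so $y(t)\le y(0)$ for $0\le t\le1$; combining this with the decay bound for $t\ge1$ and using $1+t\le2\max(1,t)$ gives $y(t)\lesssim\min\{1,\,t^{\,s_1-s_2}\}\lesssim(1+t)^{\,s_1-s_2}$, with implicit constant depending additionally on $y(0)$. The computation is essentially routine; the main point requiring care is the bookkeeping around the regularity of $t\mapsto\|g\|_{\mathcal{F}^{s,1}}(t)$ (absolute continuity, so that the differential inequality may legitimately be integrated) together with the finiteness of $\|g\|_{\mathcal{F}^{s_2+1,1}}(t)$ for a.e.\ $t$ and of $y(0)$ — all of which hold when the lemma is applied with $g=f$, $f_0\in\foneone$ and $f\in L^{1}([0,T];\mathcal{F}^{2,1}_{0})$.
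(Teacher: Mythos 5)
Your proof is correct, and it is the standard argument used to establish this kind of decay estimate in the Muskat literature (interpolation between $\mathcal{F}^{s_1,1}$ and $\mathcal{F}^{s_2+1,1}$ to get a superlinear ODE inequality $y'\le -\tilde C y^p$, then comparison). The paper itself does not prove this lemma --- it is restated from \cite{GGPS} --- but your derivation reproduces exactly the argument one finds there, including the interpolation identity matching the one recorded in \eqref{interpolation} and the observation that the asymptotic bound $y(t)\le\bigl((p-1)\tilde C\,t\bigr)^{-(s_2-s_1)}$ is independent of $y(0)$, with $y(0)$ entering only to upgrade $t^{s_1-s_2}$ to $(1+t)^{s_1-s_2}$ on the interval $t\le 1$.
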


This Decay Lemma along with \eqref{maininequality} implies the large time decay of solutions to the inhomogeneous Muskat problem. Specifically, \eqref{maininequality} implies uniform in time bounds of $\norm{f}_{\mathcal{F}^{s,1}}$ for $s=0,1$ and then we use the Decay Lemma to obtain the following result.
\begin{theorem}
    Suppose $f_0(\alpha)$ is initial data satisfying the conditions of Theorem \ref{MainTheorem}. Then the solution $f(\alpha,t)$ to the \eqref{eq: interface} decays with the rate
    \[
        \norm{f}_{\mathcal{F}^{1,1}}(t) \lesssim (1+t)^{-1}.
    \]
\end{theorem}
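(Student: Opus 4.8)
The plan is to apply the Decay Lemma (Lemma \ref{lem: decay}) directly to the solution $f(\alpha,t)$ produced by Theorem \ref{MainTheorem}, with the role of $g$ played by $f$, and with the pair of indices $s_1 = 0$, $s_2 = 1$. The two hypotheses of the Decay Lemma must be verified: first, a uniform-in-time bound $\norm{f}_{\mathcal{F}^{0,1}}(t) \leq C_0$; and second, a differential inequality $\frac{d}{dt}\norm{f}_{\mathcal{F}^{1,1}}(t) \leq -C\norm{f}_{\mathcal{F}^{2,1}}(t)$ for some $C > 0$. Both of these should be read off from the estimate \eqref{maininequality} with $\nu$ taken to be $0$ (or by first observing that the $\nu>0$ norms dominate the $\nu=0$ norms for $t\ge 0$, so the bounds transfer).

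First I would extract the uniform bound. Setting $s=0$ and $\nu = 0$ in \eqref{maininequality}, the coefficient $A_\rho\theta - A_\rho\sigma_0 - \nu$ becomes $A_\rho(\theta - \sigma_0)$, which is strictly positive by the hypothesis on $k_0, k_1$ in the statement preceding Theorem \ref{MainTheorem} (this uses $A_\rho > 0$, which holds in the Rayleigh--Taylor stable regime $\rho_2 > \rho_1$ discussed in the introduction). Hence \eqref{maininequality} gives $\norm{f}_{\mathcal{F}^{0,1}}(t) \leq \norm{f_0}_{\mathcal{F}^{0,1}_0}$ for all $t \geq 0$, so we may take $C_0 = \norm{f_0}_{\fzerone}$. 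This is the hypothesis $\norm{g}_{\mathcal{F}^{s_1,1}}(t) \leq C_0$ with $s_1 = 0$.

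Next I would establish the differential inequality at the level $s_2 = 1$. The inequality \eqref{maininequality} with $s=1$, $\nu=0$ states $\norm{f}_{\mathcal{F}^{1,1}}(t) + A_\rho(\theta - \sigma_1)\int_0^t \norm{f}_{\mathcal{F}^{2,1}}(\tau)\,d\tau \leq \norm{f_0}_{\mathcal{F}^{1,1}_0}$; since this is really the integrated form of the energy estimate derived in the proof of Theorem \ref{MainTheorem}, the corresponding pointwise-in-time differential inequality $\frac{d}{dt}\norm{f}_{\mathcal{F}^{1,1}}(t) \leq -A_\rho(\theta - \sigma_1)\norm{f}_{\mathcal{F}^{2,1}}(t)$ holds for a.e. $t$, which is exactly the second hypothesis of the Decay Lemma with $C = A_\rho(\theta-\sigma_1) > 0$. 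Applying Lemma \ref{lem: decay} then yields $\norm{f}_{\mathcal{F}^{1,1}}(t) \lesssim (1+t)^{s_1 - s_2} = (1+t)^{-1}$, which is the claimed decay rate.

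The main subtlety — rather than a deep obstacle — is the passage from the integral inequality \eqref{maininequality} to the pointwise differential inequality required by the Decay Lemma, and the interchange between the $\nu > 0$ and $\nu = 0$ norms. One must check that the solution's regularity ($f \in L^\infty([0,T];L^2\cap\foneone)\cap L^1([0,T];\mathcal{F}^{2,1}_0)$) is enough to justify that $\norm{f}_{\mathcal{F}^{1,1}}(t)$ is absolutely continuous with the stated a.e. derivative bound; this is already implicit in how \eqref{maininequality} is proven in Theorem \ref{MainTheorem}, so it suffices to invoke that. Beyond this bookkeeping, the argument is a direct citation of the Decay Lemma, and no new estimates are needed.
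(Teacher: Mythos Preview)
Your proposal is correct and follows essentially the same approach as the paper: the paper's proof consists precisely of invoking the uniform-in-time bounds on $\norm{f}_{\mathcal{F}^{s,1}}$ for $s=0,1$ from \eqref{maininequality} and then applying the Decay Lemma with $s_1=0$, $s_2=1$. Your additional remarks on the passage from the integral to the differential inequality and the $\nu>0$ versus $\nu=0$ norms are accurate bookkeeping that the paper leaves implicit.
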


To prove Theorem \ref{MainTheorem}, we employ the following collection of useful facts. First, letting iterated convolutions be denoted as
\begin{equation*}
(\ast^{n} g)(x) = (\underbrace{g \ast \cdots *g}_{n\text{ times}})(x)
\end{equation*}
we have the product rule inequalities
\begin{lemma}
Given functions $f_{k} : \mathbb{R}^{d}\rightarrow \mathbb{R}$ for $1 \leq k \leq n$ we have
\begin{equation}\label{convolutionexp}
e^{\nu t |\xi|} (|f_{1}|\ast |f_{2}| \ast \cdots \ast |f_{n}|)  \leq (e^{\nu t |\xi|}|f_{1}|)\ast (e^{\nu t |\xi|}|f_{2}|) \ast \cdots \ast (e^{\nu t |\xi|}|f_{n}|)
\end{equation}
and for $0 < s \leq 1$
\begin{equation}\label{productrule}
|\xi|^{s} (|f_{1}|\ast |f_{2}| \ast \cdots \ast |f_{n}|)  \leq \sum_{k=1}^{n} (|\xi|^{s} |f_{k}|)\ast (\ast_{j\neq k}|f_{j}|)
\end{equation}
where
$$\ast_{j\neq k}|f_{j}|$$ indicates a convolution over the absolute values of all functions $f_{j}$ except $f_{k}$.
\end{lemma}
\begin{proof}
By the triangle inequality, we have for $0< s \leq 1$
$$ |\xi_{0}|^{s} \leq |\xi_{0}-\xi_{1}|^{s} + |\xi_{1}-\xi_{2}|^{s} + \ldots + |\xi_{n-2}-\xi_{n-1}|^{s} + |\xi_{n-1}|^{s}.$$
Applying this triangle inequality to the convolution gives us \eqref{productrule}.
Moreover,
$$e^{\nu t |\xi_{0}|} \leq \prod_{j=0}^{n} e^{\nu t |\xi_{j}-\xi_{j+1}|}.$$
Plugging this inequality into the function convolution, we obtain \eqref{convolutionexp}.
\end{proof}
We also have the interpolation inequality:
\begin{proposition}
If $g\in \mathcal{F}^{s_{1},1}_{\nu} \cap \mathcal{F}^{s_{2},1}_{\nu}$ for $s_{1} < s_{2}$, then  $g\in \mathcal{F}^{s,1}_{\nu}$ for each $s\in [s_{1},s_{2}]$ and satisfies
\begin{equation}\label{interpolation}
\|g\|_{\mathcal{F}^{s,1}_{\nu}} \leq \|g\|_{\mathcal{F}^{s_{1},1}_{\nu}}^{\theta}\|g\|_{\mathcal{F}^{s_{2},1}_{\nu}}^{1-\theta}
\end{equation}
for $\theta \in [0,1]$ such that $s = \theta s_{1} + (1-\theta) s_{2}$.
\end{proposition}
Finally, to compute the linearization of \eqref{eq: interface} and of the vorticity terms, the following Fourier transforms will be needed.

\begin{proposition}
For $a \in \mathbb{R}$, we have
\begin{equation}\label{fouriercalc1}
\calF\Big[ \frac{a}{x^{2}+a^{2}} \Big](\xi) = \pi e^{-a|\xi|}
\end{equation}
and
\begin{equation}\label{fouriercalc2}
\calF\Big[ \frac{x}{x^{2}+a^{2}} \Big](\xi) = -i\pi \sgn(\xi) e^{-a|\xi|}.
\end{equation}
\end{proposition}
\begin{proof}
    Suppose $\xi < 0$ and let $\gamma$ be the upper semi-circle contour with base $[-R, R] \subset \R \subset \C$. Using the residue theorem we can compute the contour integral
    \begin{align*}
        \int_\gamma \frac{a}{z^2 + a^2} e^{-i\xi z} \; dz = 2\pi i \cdot \text{Res}\Big(\frac{a}{z^2 + a^2} e^{-i\xi z}, ia\Big)  = 2\pi i \cdot \lim_{z \to ia} (z-ia) \frac{a}{z^2 + a^2} e^{-i\xi z} = \pi e^{-a \abs{\xi}}.
    \end{align*}
    Now, taking the limit as $R \to \infty$ the left hand side becomes 
    \begin{align*}
        \lim_{R \to \infty}
        \int_\gamma \frac{a}{z^2 + a^2} e^{-i\xi z} \; dz &= \calF\Big[ \frac{a}{x^{2}+a^{2}} \Big](\xi) + \lim_{R \to \infty} \int_{_{\substack{
        0 < \theta < \pi \\ z = Re^{i\theta}}}} \frac{a}{z^2 + a^2} e^{-i\xi z} \; dz
    \end{align*}
    where
    \[
        \Big | \int_{_{\substack{
        0 < \theta < \pi \\ z = Re^{i\theta}}}} \frac{a}{z^2 + a^2} e^{-i\xi z} \; dz \Big| \leq  \int_{_{\substack{
        0 < \theta < \pi \\ z = Re^{i\theta}}}} \Big|\frac{a}{z^2 + a^2}\Big|  e^{\xi R\sin\theta} \; Rd\theta \to 0
    \]
    since $\xi < 0$. In the case that $\xi > 0$ we instead set $\gamma$ to be the lower semi-circle contour and derive the same result. The proof of \eqref{fouriercalc2} follows this same outline.
    
\end{proof}

\subsubsection*{Outline of the Paper}
In Section \ref{potentialjuimpestimates}, we compute bounds on the vorticity terms by Taylor expanding the expressions of $\omega_{i}$ and $\Omega_{i}$ and then computing the Fourier transforms. Next, in Section \ref{sec:interfacedecay}, we decompose \eqref{eq: interface} into its linear and nonlinear parts and prove \eqref{maininequality} for $s=0,1$. In Section \ref{sec:l2est}, we prove \eqref{L2inequality} and then a higher order Sobolev estimate that is used for the existence argument. We conclude in Section \ref{sec:existence} by proving Theorem \ref{MainTheorem}.

\section{Potential Jump and Vorticity}
\label{potentialjuimpestimates}
In this section, we will decompose the potential jumps $\Omega_{1}$ and $\Omega_{2}$ and the vorticity terms $\omega_{1}$ and $\omega_{2}$ into linear and nonlinear parts. We can then use this decomposition to bound $\Omega_{i}$ and $\omega_i$ in terms of the interface function $f$.

First, we compute the Fourier transform of $\Omega_{2}$. Write $\Omega_{2} = \Omega_{21} + \Omega_{22}$ where
$$\Omega_{21}(\alpha) = -\frac{A_\kappa}{\pi} \int_\R \frac{f(\alpha - \beta) + h_2 }{\beta^2 + (f(\alpha - \beta) + h_2)^2} \Omega_1(\alpha - \beta)\; d\beta,$$
and
$$\Omega_{22}(\alpha) = -\frac{A_\kappa}{\pi} \int_\R \frac{\beta \partial_\alpha f(\alpha - \beta)}{\beta^2 + (f(\alpha - \beta) + h_2)^2} \Omega_1(\alpha - \beta)\; d\beta.$$
Taking the Fourier transform of $\Omega_{21}$, we obtain using \eqref{fouriercalc1}
\begin{align}
   \hat\Omega_{21}(\xi) &= -\frac{A_\kappa}{\pi} \int_\R \int_\R  \frac{f(\alpha - \beta) + h_2}{\beta^2 + (f(\alpha - \beta) + h_2)^2}\ \Omega_1(\alpha - \beta) e^{-i\xi \alpha}\; d\beta d\alpha \nonumber \\
    &= -\frac{A_\kappa}{\pi}\int_\R \int_\R  \frac{f(y) + h_2}{\beta^2 + (f(y) + h_2)^2}\  e^{-i\xi \beta}\; d\beta\; \Omega_1(y) e^{-i\xi y}\; dy \nonumber \\
    &= -\frac{A_\kappa}{\pi}\int_\R \pi e^{-(f(y) + h_2) \abs \xi }\cdot \Omega_1(y)  e^{-i\xi y}\; dy \nonumber \\
    &= -A_\kappa e^{-h_2 \abs \xi}  \int_\R e^{-f(y) \abs \xi} \cdot \Omega_1(y) e^{-i\xi y}\; dy \nonumber \\
    &= -A_\kappa \sum_{n=0}^{\infty}e^{-h_2 \abs \xi}  \int_\R \frac{(-f(y)|\xi|)^{n}}{n!} \cdot \Omega_1(y) e^{-i\xi y}\; dy \nonumber \\
    &= -A_\kappa\sum_{n=0}^{\infty}e^{-h_2 \abs \xi} \frac{(-1)^{n}|\xi|^{n}}{n!} \Big((\ast^{n}\hat{f}) \ast \hat \Omega_1 \Big)(\xi). \label{Omega21fourierexpansion}
\end{align}
For $\Omega_{22}$, we have
\begin{align}
    \hat{ \Omega}_{22} (\xi) &= -\frac{A_\kappa}{\pi} \int_\R d\beta \int_\R d\alpha\;  e^{-i \xi \alpha}  \frac{(\alpha - \beta) \partial_\alpha f(\beta)}{(\alpha - \beta)^2 + (f(\beta) + h_2)^2} \Omega_1(\beta)  \nonumber \\
    &= -\frac{A_\kappa}{\pi} \int_\R d\beta\;  \partial_\alpha f(\beta) \Omega_1(\beta) \calF\left[\tau_\beta\left(\frac{\alpha }{\alpha^2 + (f(\beta) + h_2)^2} \right) \right](\xi) \nonumber \\
    &= -\frac{A_\kappa}{\pi} \int_\R d\beta \; e^{-i\xi \beta} \partial_\alpha f(\beta) \Omega_1(\beta) \cdot -i\pi \sgn(\xi) e^{-(f(\beta) + h_2)\abs{\xi}} \nonumber\\
    &= A_\kappa \sum_{n= 0}^\infty i \sgn(\xi) e^{-h_2\abs{\xi} } \frac{(-1)^n \abs{\xi}^n}{n!} \Big((\ast^n \hat f) * \widehat{\partial_\alpha f} * \hat \Omega_1 \Big)(\xi). \label{Omega22fourierexpansion}
\end{align}
Next, we compute similarly for $\Omega_{1}$. By \eqref{Omega1}, we write the term $\Omega_{1}=\Omega_{11}+ \Omega_{12} - 2A_{\rho}f(\alpha)$ where
\begin{equation}\label{Omega11}
    \Omega_{11} (\alpha) = -\frac{A_\mu}{\pi} \int_\R \frac{\Delta_\beta f(\alpha) - \partial_\alpha f(\alpha - \beta)}{1 + \Delta_\beta f(\alpha)^2} \frac{\Omega_1(\alpha - \beta)}{\beta}\; d\beta,
\end{equation} 
and
$$\Omega_{12}(\alpha) = - \frac{A_\mu}{\pi} \int_\R \frac{f(\alpha) + h_2}{\beta^2 + (f(\alpha) + h_2)^2}\Omega_2(\alpha - \beta) \; d\beta.$$
It can be seen that $\Omega_{11}$ has no part that is linear in $\Omega_{i}$. For $\Omega_{12}$, taking the Fourier transform, we obtain
\begin{align}
    \hat{\Omega}_{12}(\xi)
    &= -\frac{A_{\mu}}{\pi}\int_\R e^{-i\xi \alpha} \int_\R \int_\R \hat \Omega_2(\xi_1) e^{i\xi_{1}(\alpha-\beta)}d\xi_{1}\frac{f(\alpha) + h_2}{\beta^2 + (f(\alpha) + h_2)^2}\; d\beta d\alpha \nonumber\\
    &= -\frac{A_{\mu}}{\pi}\int_\R e^{-i\xi \alpha} \int_\R  \hat \Omega_2(\xi_1) e^{i\xi_{1}\alpha} \left(\int_\R e^{-i\xi_{1}\beta}\frac{f(\alpha) + h_2}{\beta^2 + (f(\alpha) + h_2)^2}\; d\beta \right) d\xi_{1} d\alpha \nonumber\\
     &= -\frac{A_{\mu}}{\pi}\int_\R e^{-i\xi\alpha} \int_\R e^{i \xi_1 \alpha} \; \hat\Omega_2(\xi_1) \ \pi e^{-(f(\alpha) + h_2)\abs{\xi_1}}\; d\xi_1 d\alpha \nonumber\\
    &= -A_{\mu}\sum_{n = 0}^{\infty} \frac{(-1)^n}{n!} \int_\R  e^{-i\xi\alpha} f(\alpha)^n \int_\R \hat \Omega_2(\xi_1) \  e^{-h_2\abs{\xi_1} } \ \abs{\xi_1}^n  e^{i \xi_1 \alpha}  \;d\xi_1 d\alpha  \nonumber\\
    &=-A_{\mu} \sum_{n = 0}^{\infty} \frac{(-1)^n}{n!}\Big((\hat \Omega_2 \  e^{-h_2\abs{\xi } }\ \abs{\xi}^n ) * (\ast^n \hat f) \Big)(\xi). \label{omega12fourierlinearpartsum}
\end{align}
The first term $\Omega_{11}$ does not have an explicit computation as the other terms, but it satisfies the following bound. The proof technique is used from \cite{CCGS}.
\begin{lemma}
For $\norm{f}_{\foneone} < 1$, we have the bound
\begin{equation}\label{Omega11abs}
    |\hat \Omega_{11}(\xi)| \leq  2 |A_\mu| \sum_{n=0}^{\infty}(\ast^{2n+1} |\widehat{\partial_\alpha f}| \ast \abs{\hat \Omega_1})(\xi).
\end{equation}
\end{lemma}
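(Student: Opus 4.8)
The plan is to expand $\Omega_{11}$ as a convergent power series, pass to Fourier variables term by term, and evaluate the resulting one-dimensional $\beta$-integrals in closed form. The hypothesis $\norm{f}_{\foneone} < 1$ enters only to secure the pointwise bound $|\Delta_\beta f(\alpha)| \le \norm{\partial_\alpha f}_{L^\infty} \le \norm{\widehat{\partial_\alpha f}}_{L^1} = \norm{f}_{\foneone} < 1$ for all $\alpha, \beta \in \R$, which justifies expanding $\tfrac{1}{1 + \Delta_\beta f(\alpha)^2} = \sum_{n \ge 0} (-1)^n (\Delta_\beta f(\alpha))^{2n}$ in \eqref{Omega11}:
\[
    \Omega_{11}(\alpha) = -\frac{A_\mu}{\pi} \sum_{n=0}^\infty (-1)^n \int_\R (\Delta_\beta f(\alpha))^{2n} \bigl[ \Delta_\beta f(\alpha) - \partial_\alpha f(\alpha - \beta) \bigr] \frac{\Omega_1(\alpha - \beta)}{\beta} \; d\beta .
\]

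Next I would take the Fourier transform of the $n$-th summand in $\alpha$, replacing each factor by its Fourier representation. Using $\widehat{\Delta_\beta f}(\xi) = \tfrac{1 - e^{-i\beta\xi}}{\beta} \hat f(\xi)$, $\widehat{\Delta_\beta f - \partial_\alpha f(\cdot - \beta)}(\xi) = m(\beta, \xi) \hat f(\xi)$ with $m(\beta, \xi) := \tfrac{1 - e^{-i\beta\xi}}{\beta} - i\xi e^{-i\beta\xi}$, and $\widehat{\Omega_1(\cdot - \beta)/\beta}(\xi) = \tfrac{e^{-i\beta\xi}}{\beta} \hat\Omega_1(\xi)$, the summand becomes a $(2n+2)$-fold convolution of $\hat f$ (appearing $2n+1$ times) with $\hat\Omega_1$, weighted inside the convolution integral by the scalar kernel
\[
    K_n(\xi_1, \dots, \xi_{2n+1}, \eta) := \int_\R \Bigl( \prod_{j=1}^{2n} \frac{1 - e^{-i\beta\xi_j}}{\beta} \Bigr) m(\beta, \xi_{2n+1}) \frac{e^{-i\beta\eta}}{\beta} \; d\beta .
\]
The whole lemma then reduces to the bound $|K_n| \le 2\pi\, |\xi_1| \cdots |\xi_{2n+1}|$: granting it, each $|\xi_j|$ upgrades a copy of $|\hat f(\xi_j)|$ to $|\widehat{\partial_\alpha f}(\xi_j)|$, and the prefactor $\tfrac1\pi \cdot 2\pi = 2$ reproduces exactly $2|A_\mu| \sum_n (\ast^{2n+1} |\widehat{\partial_\alpha f}| \ast |\hat\Omega_1|)(\xi)$.

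The main obstacle is precisely this bound on $K_n$. A naive estimate under the integral sign diverges, because the one surviving factor $1/\beta$ inherited from $\Omega_1(\cdot - \beta)/\beta$ is not integrable; the resolution — the technique borrowed from \cite{CCGS} — is the pair of elementary identities $\tfrac{1 - e^{-i\beta\xi}}{\beta} = i \int_0^\xi e^{-i\beta s}\, ds$ and $m(\beta, \xi) = \beta \int_0^\xi \int_\xi^s e^{-i\beta\tau}\, d\tau\, ds$, the second of which reveals a hidden factor of $\beta$ in the ``special'' factor $\Delta_\beta f(\alpha) - \partial_\alpha f(\alpha - \beta)$ that cancels the bad $1/\beta$. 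Concretely, $m(\beta, \xi)/\beta$ is the $\beta$-Fourier transform of an explicit function $\phi_\xi$ supported on the segment between $0$ and $\xi$ with $\norm{\phi_\xi}_{L^\infty} \le |\xi|$, while each $\tfrac{1 - e^{-i\beta\xi_j}}{\beta}$ equals $i\, \widehat{\mathbf{1}_{[0, \xi_j]}}(\beta)$; hence the $\beta$-integrand is the Fourier transform of a convolution, so Fourier inversion gives $K_n = 2\pi\, i^{2n} (\mathbf{1}_{[0,\xi_1]} \ast \cdots \ast \mathbf{1}_{[0,\xi_{2n}]} \ast \phi_{\xi_{2n+1}})(-\eta)$, and Young's inequality yields $|K_n| \le 2\pi\, \norm{\mathbf{1}_{[0,\xi_1]}}_{L^1} \cdots \norm{\mathbf{1}_{[0,\xi_{2n}]}}_{L^1} \norm{\phi_{\xi_{2n+1}}}_{L^\infty} \le 2\pi\, |\xi_1| \cdots |\xi_{2n+1}|$.

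What remains is bookkeeping: Fubini to commute $\int d\beta$, the Fourier transform, and the convolution integrals; dominated convergence to interchange $\sum_n$ with $\int d\beta$, dominating the geometric tail by $\norm{\partial_\alpha f}_{L^\infty} < 1$ uniformly in $\alpha, \beta$; and a separate check of the $n = 0$ Fourier-inversion step, where the $\beta$-integrand decays only like $|\beta|^{-1}$ but $\phi_{\xi_1}$ is of bounded variation and compactly supported so that inversion still applies. I expect the closed-form evaluation of $K_n$ to be where all the real work lies; everything else is routine.
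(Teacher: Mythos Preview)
Your argument is correct and reaches the stated bound, but the route differs from the paper's. The paper does not exploit the second-order cancellation in the combined symbol $m(\beta,\xi)=\tfrac{1-e^{-i\beta\xi}}{\beta}-i\xi e^{-i\beta\xi}$; instead it splits $\Omega_{11}=\Omega_{111}+\Omega_{112}$ according to the numerator $\Delta_\beta f(\alpha)$ versus $-\partial_\alpha f(\alpha-\beta)$, treats each piece separately, and for each piece writes the $2n+1$ factors $\tfrac{1-e^{-i\beta\xi_j}}{\beta}=i\xi_j\int_0^1 e^{i\beta(s-1)\xi_j}\,ds$ to reduce the $\beta$-integral to $\pv\int_\R \tfrac{e^{i\beta A}}{\beta}\,d\beta=i\pi\,\sgn(A)$. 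This gives $|M_n|\le \pi\,|\xi_1|\cdots|\xi_{2n+1}|$ per piece, and the factor $2$ in the lemma comes from summing the two pieces. Your approach instead keeps the pieces together, uses the identity $m(\beta,\xi)=\beta\int_0^\xi\!\int_\xi^s e^{-i\beta\tau}\,d\tau\,ds$ to absorb the residual $1/\beta$, and then reads off $K_n$ by Fourier inversion of a genuine $L^1$ (for $n\ge1$) or BV (for $n=0$) convolution, yielding $|K_n|\le 2\pi\,|\xi_1|\cdots|\xi_{2n+1}|$ directly. The two give identical constants; yours trades the principal-value Hilbert-transform identity for an explicit inversion, at the cost of singling out the $n=0$ case, while the paper's split avoids that case distinction but handles two near-identical terms and leans on the $\sgn$ evaluation throughout.
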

\begin{proof}
We first consider the term:
\begin{align*} \Omega_{11}(\alpha) &= -\frac{A_\mu}{\pi} \int_\R \frac{\Delta_\beta f(\alpha) - \partial_\alpha f(\alpha - \beta)}{1 + \Delta_\beta f(\alpha)^2} \frac{\Omega_1(\alpha - \beta)}{\beta}\; d\beta\\
&\eqdef \Omega_{111}(\alpha) + \Omega_{112} (\alpha).
\end{align*}
Taking the Fourier transform of the first term $\Omega_{111}$ and Taylor expanding the denominator for $|\Delta_\beta f(\alpha)| < 1$, we obtain
\begin{align*}
   \hat \Omega_{111} (\xi) &= -\frac{A_{\mu}}{\pi}\int_\R \calF \left[ \frac{\Delta_\beta f(\alpha)}{1 + \Delta_\beta f(\alpha)^2} \frac{\Omega_1(\alpha - \beta)}{\beta} \right](\xi)\; d\beta\\
    &= -\frac{A_{\mu}}{\pi}\sum_{n = 0}^\infty (-1)^n \int_\R \calF\left[ \Delta_\beta f(\alpha)^{2n+1} \frac{\Omega_1(\alpha - \beta)}{\beta}\right](\xi) \; d\beta \\
    &= -\frac{A_{\mu}}{\pi}\sum_{n = 0}^\infty  (-1)^n \int_\R (\ast^{2n+1} (\hat f m_\beta) * \widehat{\tau_\beta \Omega_1})(\xi)\; \frac{d\beta}{\beta}
\end{align*}
in which
\[
    m_\beta(\xi) = \frac{1 - e^{-i\xi\beta}}{\beta}.
\]
Expanding the convolution of the $n$-th term in the sum, we have
\begin{align*}
    \int_\R (\ast^{2n+1} (\hat f m_\beta) * \widehat{\tau_\beta \Omega_1})(\xi) \frac{d\beta}{\beta} = \int_\R d\xi_1 \ldots \int_\R d\xi_{2n+1} \hat f(\xi - \xi_1) \cdot \ldots \cdot \hat f(\xi_{2n} - \xi_{2n+1}) \cdot \hat \Omega_1(\xi_{2n+1}) \cdot M_n
\end{align*}
where 
\begin{align*}
    M_n = M_n(\xi, \xi_1, \ldots, \xi_{2n+1}) = \int_\R d\beta \frac{e^{-i\xi_{2n+1} \beta}}{\beta} m_\beta(\xi - \xi_1) \cdot \ldots \cdot m_{\beta}(\xi_{2n} - \xi_{2n+1} ).
\end{align*}
Since
\[
    m_\beta(\xi) = i\xi \int_0^1 ds\; e^{i\beta(s-1)\xi}
\]
we have
\begin{align*}
    |M_n| &= \Big|i^{2n}(\xi - \xi_1) \cdot \ldots \cdot (\xi_{2n} - \xi_{2n+1}) \int_0^1 ds_1 \ldots \int_0^1 ds_{2n+1} \int_\R d\beta \frac{e^{-i\xi_{2n+1} \beta}}{\beta} e^{i \beta A}\Big|\\
    &= \Big| (\xi - \xi_1) \cdot \ldots \cdot (\xi_{2n} - \xi_{2n+1}) \int_0^1 ds_1 \ldots \int_0^1 ds_{2n+1}\  i\pi \sgn(A- \xi_{2n+1})\Big|\\
    &\leq \pi \cdot \abs{\xi - \xi_1} \cdot \ldots \cdot \abs{\xi_{2n} - \xi_{2n+1}}
\end{align*}
where 
\[
    A = (s_1 - 1) (\xi - \xi_1) + \ldots + (s_{2n+1} - 1) (\xi_{2n} - \xi_{2n+1}).
\]
Therefore,
\begin{align*}
    \dabs{\int_\R (\ast^{2n+1} (\hat f m_\beta) * \widehat{\tau_\beta \omega_1})(\xi) \frac{d\beta}{\beta}}
    &\leq \int_\R d\xi_1 \ldots \int_\R d\xi_{2n+1} \abs{ \hat f(\xi - \xi_1)} \cdot \ldots \cdot \abs{\hat f(\xi_{2n} - \xi_{2n+1})} \cdot \abs{\hat \Omega_1(\xi_{2n+1})} \cdot \abs{M_n} \\
    &\leq \pi (\ast^{2n+1} |\widehat{\partial_\alpha f}| \ast \abs{\hat \Omega_1})(\xi).
\end{align*}
The term $\Omega_{112}$ is bounded by the same quantity using a similar computation. This concludes the proof.
\end{proof}

Next, we consider the vorticity terms. The Fourier transform of $\omega_{2}$ can be computed similarly to $\Omega_{21}$

\begin{equation}\label{omega2fourierexpansion}
   \hat\omega_2(\xi) = -A_\kappa\sum_{n=0}^{\infty}e^{-h_2 \abs \xi} \frac{(-1)^{n}|\xi|^{n}}{n!} \Big((\ast^{n}\hat{f}) \ast \hat \omega_1 \Big)(\xi). 
\end{equation}
For $ \omega_{1}$, similarly to $\Omega_{1}$, we decompose $\omega_{1} = \omega_{11} + \omega_{12} -2A_{\rho}\partial_{\alpha}f$ where we have the analogous bound on $\hat \omega_{11}$
\begin{equation}\label{omega11est}
|\hat\omega_{11}(\xi)| \leq 2 |A_\mu|  \sum_{n=0}^{\infty}(\ast^{2n+1} |\widehat{\partial_\alpha f}| \ast \abs{\hat \omega_1})(\xi)
\end{equation}
and, $\hat \omega_{12}(\xi)$ is explicitly computed as $-i\xi \hat \Omega_{12}(\xi)$ using \eqref{productrule}
\begin{align}
\hat \omega_{12}(\xi) &=A_{\mu}\sum_{n = 0}^\infty \frac{(-1)^{n+1}}{n!} \Big((\ i \sgn(\cdot) e^{-h_2 \abs{\cdot }} \ \abs{ \cdot }^n \hat{\omega}_2) \ast \widehat{\partial_\alpha f} * (\ast^n \hat f) \Big)(\xi) \label{omega12calc} \\ &\hspace{1in} - A_{\mu}\sum_{n = 0}^{\infty} \frac{(-1)^n}{n!}\Big((\hat \omega_2 \ e^{-h_2\abs{\cdot } }\ \abs{\cdot}^n ) * (\ast^n \hat f) \Big)(\xi) \nonumber
\end{align}
Using the computations above, we can bound the vorticity terms in the frequency space norms $\fsonenu$ for $s=0,1$.
\begin{proposition}
The term $\omega_2$ satisfies

\begin{align}\label{lem:omega2fzeroneinitial}
     \|\omega_{2}\|_{\fzeronenu} &\leq   \abs{A_\kappa}C_{0} \|\omega_1\|_{\fzeronenu}
\end{align}
and
\begin{align}\label{lem:omega2foneoneinitial}
     \|\omega_{2}\|_{\foneonenu} &\leq   \abs{A_\kappa} \frac{C_{2}}{\norm{f}_{\fzeronenu}} \|\omega_1\|_{\fzeronenu}\|f\|_{\foneonenu} + \abs{A_\kappa}C_{0} \|\omega_1\|_{\foneonenu}
\end{align}
where 
\begin{equation}\label{C0}C_{0} = C_{0}(\|f\|_{\fzeronenu}) = \sum_{n = 0}^\infty \dnorm{ e^{-h_2\abs\xi} \frac{\abs{\xi}^n}{n!}}_{L^\infty} \norm{f}_{\fzeronenu}^n = \sum_{n = 0}^\infty \frac{n^{n}}{e^nn!} \left(\frac{\norm{f}_{\fzeronenu}}{h_2}\right)^n
\end{equation}
and 
\begin{equation}\label{C2}
C_2 = \sum_{n=1}^{\infty} n \dnorm{ e^{-h_2\abs\xi} \frac{\abs{\xi}^n}{n!}}_{L^\infty} \norm{f}_{\fzeronenu}^n = \sum_{n = 1}^\infty  \frac{n^{n+1}}{e^nn!} \left(\frac{\norm{f}_{\fzeronenu}}{h_2}\right)^n
\end{equation}
which converge for $\norm{f}_{\fzeronenu} < h_{2}$. Note that $C_0 \to 1$ and $C_2 \to 0$ in the limit $\norm{f}_{\fzeronenu} \to 0$.
\end{proposition}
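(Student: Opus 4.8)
The plan is to argue term by term in the Fourier series expansion \eqref{omega2fourierexpansion} of $\hat\omega_2$. The three tools are: the weight--distribution inequality \eqref{convolutionexp} to move $e^{\nu t|\xi|}$ inside the iterated convolution; the product--rule inequality \eqref{productrule} to move a factor $|\xi|$ onto one of the factors of that convolution; and submultiplicativity of the $L^1$ norm under convolution (Young's inequality) to decouple the factors after integrating in $\xi$. The scalar multiplier $e^{-h_2|\xi|}|\xi|^n/n!$ that sits in front of the $n$-th convolution is then pulled out in $L^\infty$, and the elementary optimization $\sup_{x\ge 0} x^n e^{-h_2 x} = (n/(h_2 e))^n$, attained at $x = n/h_2$, gives $\bignorm{e^{-h_2|\xi|}|\xi|^n/n!}_{L^\infty} = n^n/(e^n n!\, h_2^n)$, which is precisely the generic coefficient appearing in \eqref{C0} and \eqref{C2}.

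For \eqref{lem:omega2fzeroneinitial}: multiply \eqref{omega2fourierexpansion} by $e^{\nu t|\xi|}$, take absolute values, and apply \eqref{convolutionexp} to get $e^{\nu t|\xi|}\,|(\ast^n\hat f \ast \hat\omega_1)(\xi)| \le \big((\ast^n e^{\nu t|\cdot|}|\hat f|)\ast (e^{\nu t|\cdot|}|\hat\omega_1|)\big)(\xi)$. Integrating in $\xi$ and using Young, the $n$-th term is bounded by $|A_\kappa|\, \bignorm{e^{-h_2|\xi|}|\xi|^n/n!}_{L^\infty}\, \norm{f}_{\fzeronenu}^n\, \norm{\omega_1}_{\fzeronenu}$, and summing over $n$ yields exactly $|A_\kappa| C_0\norm{\omega_1}_{\fzeronenu}$.

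For \eqref{lem:omega2foneoneinitial}: proceed identically, but now we must also account for the extra factor $|\xi|$ in the $\fonenu$ norm. Applying \eqref{productrule} with $s=1$ to the $(n{+}1)$-fold convolution $(\ast^n e^{\nu t|\cdot|}|\hat f|)\ast(e^{\nu t|\cdot|}|\hat\omega_1|)$ splits it into $n$ terms in which $|\xi|$ lands on a copy of $e^{\nu t|\cdot|}|\hat f|$ — each contributing, after Young, $\norm{f}_{\foneonenu}\norm{f}_{\fzeronenu}^{n-1}\norm{\omega_1}_{\fzeronenu}$ by symmetry — plus one term in which $|\xi|$ lands on $e^{\nu t|\cdot|}|\hat\omega_1|$, contributing $\norm{f}_{\fzeronenu}^n\norm{\omega_1}_{\foneonenu}$. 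Carrying the $L^\infty$ scalar as before, the $n$-th term is bounded by $|A_\kappa|\,\tfrac{n^n}{e^n n! h_2^n}\big(n\norm{f}_{\foneonenu}\norm{f}_{\fzeronenu}^{n-1}\norm{\omega_1}_{\fzeronenu} + \norm{f}_{\fzeronenu}^n\norm{\omega_1}_{\foneonenu}\big)$. Summing the second piece gives $|A_\kappa|C_0\norm{\omega_1}_{\foneonenu}$, and the first gives $|A_\kappa|\big(\sum_{n\ge 1}\tfrac{n\cdot n^n}{e^n n! h_2^n}\norm{f}_{\fzeronenu}^{n-1}\big)\norm{f}_{\foneonenu}\norm{\omega_1}_{\fzeronenu} = |A_\kappa|\tfrac{C_2}{\norm{f}_{\fzeronenu}}\norm{f}_{\foneonenu}\norm{\omega_1}_{\fzeronenu}$ by the definition \eqref{C2} (the index shift absorbs the $\norm{f}_{\fzeronenu}$ in the denominator). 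Finally, convergence of both series for $\norm{f}_{\fzeronenu} < h_2$ follows from $n^n/(e^n n!)\sim (2\pi n)^{-1/2}$ (Stirling), the extra polynomial factor in $C_2$ being harmless; the limits $C_0\to 1$, $C_2\to 0$ as $\norm{f}_{\fzeronenu}\to 0$ are immediate since only the $n=0$ term of $C_0$ survives while $C_2$ starts at $n=1$.

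There is no genuine analytic obstacle here; the only point requiring care is the combinatorial bookkeeping in the application of \eqref{productrule} — recognizing that all $n$ placements of $|\xi|$ on an $\hat f$ factor yield the same $L^1$ bound, which is exactly what produces the factor $n$ and the shifted sum $C_2/\norm{f}_{\fzeronenu}$ rather than $C_2$ itself.
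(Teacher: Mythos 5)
Your proposal is correct and follows essentially the same route as the paper: expand $\hat\omega_2$ via \eqref{omega2fourierexpansion}, pull out $\|e^{-h_2|\xi|}|\xi|^n/n!\|_{L^\infty}$, apply \eqref{convolutionexp} with Young's inequality, and use \eqref{productrule} for the $\mathcal{F}^{1,1}_\nu$ case. The only difference is that you spell out the $n$-fold symmetry in the placement of $|\xi|$ that produces the factor $n$ in $C_2$, a step the paper compresses into ``Applying \eqref{productrule}''; your bookkeeping is accurate, including the index shift $C_2/\norm{f}_{\fzeronenu}$.
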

\begin{proof}
Using Young's inequalities for convolutions, \eqref{convolutionexp}, and \eqref{omega2fourierexpansion}:
\begin{align*}
\norm{\omega_2}_{\fzeronenu} &\leq \abs{A_\kappa}   \sum_{n=0}^{\infty} \Big\| e^{\nu t \abs{\xi}} e^{-h_2 \abs \xi} \frac{(-1)^{n}|\xi|^{n}}{n!} \Big((\ast^{n}\hat{f}) \ast \hat \omega_1 \Big)(\xi)\Big\|_{L^1}\\
&\leq  \abs{A_\kappa} \sum_{n=0}^{\infty} \dnorm{ e^{-h_2 \abs \xi} \frac{\abs{\xi}^{n}}{n!}}_{L^\infty} \Big\| e^{\nu t \abs{\xi}} (\ast^{n}\hat{f}) \ast \hat \omega_1 \Big\|_{L^1}\\
&\leq \abs{A_\kappa}\Big( \sum_{n = 0}^\infty \dnorm{ e^{-h_2 \abs \xi} \frac{\abs{\xi}^{n}}{n!}}_{L^\infty} \|f\|_{\fzeronenu}^n \Big) \|\omega_1\|_{\fzeronenu}.
\end{align*}
This computation yields \eqref{lem:omega2fzeroneinitial}. Applying \eqref{productrule}, we obtain \eqref{lem:omega2foneoneinitial}.
\end{proof}
We will use \eqref{lem:omega2fzeroneinitial} and \eqref{lem:omega2foneoneinitial} implicitly to bound $\omega_{1}$ in the same norms.
\begin{lemma}
The vorticity term $\omega_{1}$ satisfies the bounds
\begin{equation}\label{omega1fzeronenubound}
\|\omega_1\|_{\fzeronenu} \leq 2A_\rho C_1\norm{f}_{\foneonenu}
\end{equation}
and
\begin{equation}\label{omega1foneonenubound}
\|\omega_1\|_{\foneonenu} \leq 2A_\rho C_1 C_3  \norm{f}_{\ftwonenu}
\end{equation}
where 
\begin{equation}\label{C1}
     C_1 = \left(1- \abs{A_\mu} \left[\frac{2\norm{f}_{\foneonenu}}{1 - \norm{f}_{\foneonenu}^2} +   \abs{A_\kappa} C_0^2  (1 + \norm{f}_{\foneonenu})\right]\right)\inv
     \end{equation}
     and
\begin{equation}\label{C3}
C_3 =  1 + 2 \abs{A_\mu} C_1 \bigg(\frac{\norm{f}_{\foneonenu} (1 + \norm{f}_{\foneonenu}^2)}{(1 - \norm{f}_{\foneonenu}^2)^2} + \frac{1}{2} \abs{A_\kappa} C_0 \Big[(C_0 + 2C_2) \norm{f}_{\foneonenu} + C_2(1 +  \norm{f}_{\foneonenu}  )\Big] \bigg)
\end{equation}
 are defined for $\|f\|_{\fzeronenu}< k_0(|A_{\mu}|,|A_{\kappa}|)$ and $\|f\|_{\foneonenu} <  k_1(|A_{\mu}|,|A_{\kappa}|)$. As $\norm{f}_{\fzeronenu} + \norm{f}_{\foneonenu} \to 0$, we have $C_1 \to (1- \abs{A_\kappa} \abs{A_\mu} )\inv $ and $C_3 \to 1$. 
\end{lemma}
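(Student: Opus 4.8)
The plan is to bound $\omega_1$ through its decomposition $\omega_1 = \omega_{11} + \omega_{12} - 2A_\rho\partial_\alpha f$, treating each occurrence of $\norm{\omega_1}$ (in the norm being estimated) that shows up on the right-hand side as a quantity to be absorbed into the left. For \eqref{omega1fzeronenubound}: to $\hat\omega_{11}$ I would apply Young's convolution inequality together with \eqref{convolutionexp} to \eqref{omega11est}; since $\norm{e^{\nu t|\xi|}\widehat{\partial_\alpha f}}_{L^1} = \norm{f}_{\foneonenu}$, the $n$-th term contributes $\norm{f}_{\foneonenu}^{2n+1}\norm{\omega_1}_{\fzeronenu}$ and summing the geometric series (valid because $\norm{f}_{\foneonenu} < 1$) gives $\norm{\omega_{11}}_{\fzeronenu} \leq 2|A_\mu|\frac{\norm{f}_{\foneonenu}}{1-\norm{f}_{\foneonenu}^2}\norm{\omega_1}_{\fzeronenu}$. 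To $\hat\omega_{12}$ in \eqref{omega12calc} I would apply the same tools: each of the two sums carries an $\omega_2$-factor multiplied by $e^{-h_2|\xi|}|\xi|^n/n!$, whose $L^\infty$ norm summed against $\norm{f}_{\fzeronenu}^n$ produces exactly the constant $C_0$ of \eqref{C0}, so that $\norm{\omega_{12}}_{\fzeronenu} \leq |A_\mu| C_0 (1+\norm{f}_{\foneonenu})\norm{\omega_2}_{\fzeronenu}$; then \eqref{lem:omega2fzeroneinitial} replaces $\norm{\omega_2}_{\fzeronenu}$ by $|A_\kappa| C_0\norm{\omega_1}_{\fzeronenu}$. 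Since $\norm{2A_\rho\partial_\alpha f}_{\fzeronenu} = 2A_\rho\norm{f}_{\foneonenu}$ (recall $A_\rho > 0$ in the stable regime), adding the three estimates leaves $\norm{\omega_1}_{\fzeronenu}$ on both sides; moving it across produces precisely the factor $C_1^{-1}$ of \eqref{C1}, and since $k_0, k_1$ are chosen so that $C_1 > 0$, dividing yields \eqref{omega1fzeronenubound}.

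For \eqref{omega1foneonenubound} I would repeat the argument but now invoke the product rule \eqref{productrule} with $s = 1$ to distribute the single derivative across each convolution. In the $\omega_{11}$ contribution the derivative either lands on one of the $2n+1$ copies of $\widehat{\partial_\alpha f}$ — each placement costing $\norm{f}_{\ftwonenu}\norm{f}_{\foneonenu}^{2n}\norm{\omega_1}_{\fzeronenu}$, with $2n+1$ of them, so that $\sum_{n}(2n+1)\norm{f}_{\foneonenu}^{2n} = (1+\norm{f}_{\foneonenu}^2)(1-\norm{f}_{\foneonenu}^2)^{-2}$ appears — or on $\hat\omega_1$, reproducing the term $2|A_\mu|\frac{\norm{f}_{\foneonenu}}{1-\norm{f}_{\foneonenu}^2}\norm{\omega_1}_{\foneonenu}$. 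In the $\omega_{12}$ contribution the derivative can hit $\hat\omega_2$ (then handled by \eqref{lem:omega2foneoneinitial}, which re-introduces $\norm{\omega_1}_{\fzeronenu}$ together with a $C_2$-factor), a copy of $\hat f$ (producing the $C_2$-type sums $\sum_{n} n\,\norm{e^{-h_2|\xi|}|\xi|^n/n!}_{L^\infty}\norm{f}_{\fzeronenu}^n$ of \eqref{C2}), or $\widehat{\partial_\alpha f}$ (upgrading it to $\norm{f}_{\ftwonenu}$). Using $\norm{2A_\rho\partial_\alpha f}_{\foneonenu} = 2A_\rho\norm{f}_{\ftwonenu}$ together with the substitutions \eqref{lem:omega2fzeroneinitial}, \eqref{lem:omega2foneoneinitial}, and the already-proven \eqref{omega1fzeronenubound} for every remaining $\norm{\omega_1}_{\fzeronenu}$, the only unknown left on the right is $\norm{\omega_1}_{\foneonenu}$; absorbing it again produces the coefficient $C_1^{-1}$, and the prefactor multiplying $\norm{f}_{\ftwonenu}$ that remains, after collecting all the pieces, is exactly $2A_\rho C_1 C_3$ with $C_3$ as in \eqref{C3}.

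The hard part is organizational rather than conceptual: through the two product-rule expansions and the $\omega_2$-substitutions one must carefully keep track of which terms are proportional to $\norm{\omega_1}$ in the norm being estimated so that they can be moved to the left, and then verify that the leftover constants reassemble into the precise closed forms \eqref{C1} and \eqref{C3}. The only genuine analytic input is the pair of smallness hypotheses: $\norm{f}_{\fzeronenu} < k_0$ forces $\norm{f}_{\fzeronenu} < h_2$, which is what makes $C_0$ and $C_2$ — hence $C_1$ and $C_3$ — finite, while $k_0, k_1$ are chosen precisely so that $C_1 > 0$, which is what allows the $\norm{\omega_1}$-terms to be absorbed. As a consistency check, $C_0 \to 1$ and $C_2 \to 0$ as $\norm{f}_{\fzeronenu} \to 0$, so $C_1 \to (1 - |A_\mu||A_\kappa|)^{-1}$ and $C_3 \to 1$, matching the limits asserted in the statement.
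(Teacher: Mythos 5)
Your proposal is correct and follows essentially the same route as the paper: decompose $\omega_1 = \omega_{11} + \omega_{12} - 2A_\rho\partial_\alpha f$, bound $\omega_{11}$ via \eqref{omega11est} with \eqref{convolutionexp} and Young's inequality, bound $\omega_{12}$ via \eqref{omega12calc} together with \eqref{lem:omega2fzeroneinitial}/\eqref{lem:omega2foneoneinitial}, and then absorb the $\norm{\omega_1}$ term implicitly to generate $C_1^{-1}$. The one ingredient you leave unnamed in the $\foneonenu$ step is the interpolation inequality \eqref{interpolation}: after you substitute \eqref{omega1fzeronenubound} and \eqref{eq:omega2fzerone} for the remaining $\fzeronenu$-norms of $\omega_1, \omega_2$, you are left with factors like $\norm{f}_{\foneonenu}^2$ or $\norm{f}_{\foneonenu}^2/\norm{f}_{\fzeronenu}$, and it is exactly \eqref{interpolation} (in the form $\norm{f}_{\foneonenu}^2 \leq \norm{f}_{\fzeronenu}\norm{f}_{\ftwonenu}$) that converts these into the single $\norm{f}_{\ftwonenu}$ appearing in \eqref{omega1foneonenubound} with coefficient $C_3$ as written. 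Once that is made explicit, your outline matches the paper's computation leading to \eqref{omega11foneone} and \eqref{omega12foneone}.
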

\begin{proof}
Similarly to the proof of \eqref{lem:omega2fzeroneinitial}, we use \eqref{convolutionexp} and Young's inequality to obtain from \eqref{omega11est} that
\begin{align}
\|\omega_{11}\|_{\fzeronenu} &\leq 2|A_{\mu}| \sum_{n=0}^{\infty}\|f\|_{\foneonenu}^{2n+1}\|\omega_{1}\|_{\fzeronenu}\nonumber\\
&\leq |A_{\mu}|\frac{2\norm{f}_{\foneonenu}}{1 - \norm{f}_{\foneonenu}^2}    \|\omega_{1}\|_{\fzeronenu}.\label{omega11fzerone}
\end{align}
We can also bound $\omega_{12}$ from \eqref{omega12calc} by 
\begin{align}
\|\omega_{12}\|_{\fzeronenu} &\leq |A_{\mu}|C_{0}(1+\|f\|_{\foneonenu})\|\omega_{2}\|_{\fzeronenu} \nonumber\\
&\leq |A_\kappa| |A_{\mu}|C_{0}^{2}(1+\|f\|_{\foneonenu})\|\omega_1\|_{\fzeronenu} \label{omega12fzerone}
\end{align}
where we used \eqref{lem:omega2fzeroneinitial} in the second inequality. Hence, we now have that
$$\|\omega_{1}\|_{\fzeronenu} \leq 2 |A_{\mu}|\frac{\norm{f}_{\foneonenu}}{1 - \norm{f}_{\foneonenu}^2} \|\omega_{1}\|_{\fzeronenu} + |A_\kappa |  |A_{\mu}|C_{0}^{2}(1+\|f\|_{\foneonenu})\|\omega_{1}\|_{\fzeronenu} + 2A_\rho\|f\|_{\foneonenu}.$$
Using \eqref{lem:omega2fzeroneinitial} solving for $\|\omega_{1}\|_{\fzeronenu}$ in the inequality implies \eqref{omega1fzeronenubound}. The estimate \eqref{omega1foneonenubound} follows similarly by applying \eqref{productrule}. Next, via \eqref{interpolation} we compute 
\begin{align*}
\norm{\omega_{11}}_{\foneonenu} &\leq 2|A_{\mu}|\frac{1+\norm{f}_{\foneonenu}^{2}}{(1 - \norm{f}_{\foneonenu}^2)^{2}}\|f\|_{\ftwonenu}    \|\omega_{1}\|_{\fzeronenu} + |A_{\mu}|\frac{2\norm{f}_{\foneonenu}}{1 - \norm{f}_{\foneonenu}^2}    \|\omega_{1}\|_{\foneonenu},\\
\norm{\omega_{12}}_{\foneonenu} &\leq \abs{A_\mu} (C_0 + C_2)\norm{\omega_2}_{\fzeronenu}\norm{f}_{\ftwonenu} + \abs{A_\mu} C_0   \norm{f}_{\foneonenu}\norm{\omega_2}_{\foneonenu}\\
&\hspace{5em} + \abs{A_\mu} \Big( \frac{C_2}{\norm{f}_{\fzeronenu}} \norm{f}_{\foneonenu} \norm{\omega_2}_{\fzeronenu}  + C_0 \norm{\omega_2}_{\foneonenu} \Big).
\end{align*}
Now, using \eqref{omega1fzeronenubound} and \eqref{lem:omega2fzeroneinitial} gives \eqref{eq:omega2fzerone} from the next proposition. Using \eqref{eq:omega2fzerone}, \eqref{lem:omega2foneoneinitial}, \eqref{omega1fzeronenubound} and applying the interpolation \eqref{interpolation}, we obtain
\begin{align}
\norm{\omega_{11}}_{\foneonenu} &\leq 4 A_\rho \abs{A_\mu} C_1 \frac{\norm{f}_{\foneonenu} (1 + \norm{f}_{\foneonenu}^2)}{(1- \norm{f}_{\foneonenu}^2)^2} \norm{f}_{\ftwonenu} + 2\abs{A_\mu} \frac{\norm{f}_{\foneonenu}}{1 - \norm{f}_{\foneonenu}^2} \norm{\omega_1}_{\foneonenu} \label{omega11foneone}\\
\norm{\omega_{12}}_{\foneonenu} &\leq 2 A_\rho \abs{A_\kappa} \abs{A_\mu} C_0 C_1 (C_0 + 2C_2) \norm{f}_{\foneonenu}\norm{f}_{\ftwonenu} +  \abs{A_\kappa} \abs{A_\mu} C_0^2(1 + \norm{f}_{\foneonenu}) \norm{\omega_1}_{\foneonenu}\nonumber\\
&\hspace{5em} + 2 A_\rho \abs{A_\kappa}\abs{A_\mu} C_0 C_1 C_2 (1 + \norm{f}_{\foneonenu}) \norm{f}_{\ftwonenu}  \label{omega12foneone}.
\end{align}
Computing implicitly as before yields the bound.
\end{proof}

Plugging the estimates \eqref{omega1fzeronenubound} and \eqref{omega1foneonenubound} into \eqref{lem:omega2fzeroneinitial} and \eqref{lem:omega2foneoneinitial}, and then using \eqref{interpolation} we obtain
\begin{proposition}
The term $\omega_{2}$ is bounded as
   \begin{align}
\|\omega_{2}\|_{\fzeronenu} &\leq   2  A_\rho \abs{A_\kappa} C_0 C_1 \|f\|_{\foneonenu}  \label{eq:omega2fzerone}\\
\|\omega_2\|_{\foneonenu} &\leq 2 A_\rho \abs{A_\kappa} C_1 C_4 \norm{f}_{\ftwonenu}\label{omega2foneone}
\end{align} 
where
\begin{equation}\label{C4}
C_{4} = C_{2} + C_{0}C_{3} \to 1 \quad \text{as} \quad \norm{f}_{\fzeronenu} + \norm{f}_{\foneonenu} \to 0.
\end{equation}
\end{proposition}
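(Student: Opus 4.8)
The plan is to chain together the vorticity estimates already proved. For \eqref{eq:omega2fzerone} I would start from \eqref{lem:omega2fzeroneinitial}, which bounds $\|\omega_2\|_{\fzeronenu}$ by $|A_\kappa| C_0 \|\omega_1\|_{\fzeronenu}$, and substitute the bound \eqref{omega1fzeronenubound} on $\|\omega_1\|_{\fzeronenu}$ in terms of $\|f\|_{\foneonenu}$; this produces exactly the constant $2 A_\rho |A_\kappa| C_0 C_1$.

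For \eqref{omega2foneone} I would begin from \eqref{lem:omega2foneoneinitial}, which has two terms, one proportional to $\|\omega_1\|_{\fzeronenu}\|f\|_{\foneonenu}/\|f\|_{\fzeronenu}$ and one proportional to $C_0\|\omega_1\|_{\foneonenu}$. Into the first I substitute \eqref{omega1fzeronenubound}, producing a term $2 A_\rho |A_\kappa| C_1 C_2 \, \|f\|_{\foneonenu}^2/\|f\|_{\fzeronenu}$; into the second I substitute \eqref{omega1foneonenubound}, producing $2 A_\rho |A_\kappa| C_0 C_1 C_3 \, \|f\|_{\ftwonenu}$. The one non-routine step is to absorb the quotient $\|f\|_{\foneonenu}^2/\|f\|_{\fzeronenu}$ into $\|f\|_{\ftwonenu}$: applying the interpolation inequality \eqref{interpolation} with $s_1 = 0$, $s_2 = 2$, $\theta = 1/2$ gives $\|f\|_{\foneonenu} \leq \|f\|_{\fzeronenu}^{1/2}\|f\|_{\ftwonenu}^{1/2}$, hence $\|f\|_{\foneonenu}^2/\|f\|_{\fzeronenu} \leq \|f\|_{\ftwonenu}$ (the case $f \equiv 0$ being trivial, since then $\omega_2 \equiv 0$). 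Collecting the two terms and factoring out $2 A_\rho |A_\kappa| C_1$ leaves the constant $C_2 + C_0 C_3$, which is $C_4$ as in \eqref{C4}.

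Finally, the stated limit $C_4 \to 1$ as $\|f\|_{\fzeronenu} + \|f\|_{\foneonenu} \to 0$ follows from the limits already recorded in the preceding lemmas, namely $C_0 \to 1$, $C_2 \to 0$ and $C_3 \to 1$. Every constant here is finite under the smallness hypotheses $\|f\|_{\fzeronenu} < k_0(|A_\mu|,|A_\kappa|)$ and $\|f\|_{\foneonenu} < k_1(|A_\mu|,|A_\kappa|)$, exactly the conditions under which $C_0,\dots,C_3$ were defined, so no new restriction on the data is introduced. I do not expect any genuine analytic obstacle; the only thing to keep track of is that $C_0,\dots,C_4$ are themselves functions of $\|f\|_{\fzeronenu}(t)$ and $\|f\|_{\foneonenu}(t)$ and hence time-dependent, but since all the estimates above are pointwise in $t$ this bookkeeping causes no difficulty.
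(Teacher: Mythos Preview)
Your proposal is correct and follows essentially the same route as the paper: the paper simply says to plug \eqref{omega1fzeronenubound} and \eqref{omega1foneonenubound} into \eqref{lem:omega2fzeroneinitial} and \eqref{lem:omega2foneoneinitial} and then apply the interpolation \eqref{interpolation}, which is precisely what you have spelled out (including the key step $\|f\|_{\foneonenu}^{2}/\|f\|_{\fzeronenu}\le\|f\|_{\ftwonenu}$).
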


\section{Instant Analyticity and Decay Inequality for the Interface}
\label{sec:interfacedecay}
In this section, we will prove the inequality \eqref{maininequality} which will imply the instantaneous gain of analytic regularity and the decay to the flat solution of the density jump interface for initial data of an explicitly calculable size. We perform these estimates in the spaces defined in \eqref{Fspnu} for $p=1$, $\nu > 0$ and $0\leq s \leq 1$. We first need to linearize the contour equation for the fluid-fluid interface $f(\alpha,t)$. To do so, we need to extract the linear part of $\omega_{i}$. First, by \eqref{omega2fourierexpansion}, we can write the decomposition in of $\omega_{2}$ in frequency space:
\begin{equation}\label{omega2linearnonlinear}
\hat{\omega}_{2}(\xi) = L(\hat\omega_{2})(\xi) + N(\hat\omega_{2})(\xi)
\end{equation}
where
$$L(\hat\omega_{2})(\xi) = -A_\kappa e^{-h_2 \abs \xi}  \hat \omega_{1}(\xi)$$
and
\begin{align*}
N(\hat\omega_{2})(\xi) &= -A_\kappa\sum_{n=1}^{\infty}e^{-h_2 \abs \xi} \frac{(-1)^{n}|\xi|^{n}}{n!} \Big((\ast^{n}\hat{f}) \ast \hat \omega_1 \Big)(\xi)
\end{align*}
From \eqref{omega12calc}, we can write
\begin{equation}\label{Omega1linearnonlinear}
\hat \omega_{1}(\xi) = L(\hat\omega_{1})(\xi) + N(\hat\omega_{1})(\xi)
\end{equation}
where
$$L(\hat \omega_{1})(\xi) = -A_\mu e^{-h_2 \abs \xi}  \hat \omega_{2}(\xi) - 2A_{\rho}\widehat{\partial_{\alpha}f}(\xi)$$
and
\begin{align}
\label{NOmega1hat}
N(\hat \omega_{1})(\xi) &= \hat \omega_{11}(\xi) -A_{\mu} \sum_{n = 1}^{\infty} \frac{(-1)^n}{n!}\Big((\hat \omega_2 \  e^{-h_2\abs{\xi } }\ \abs{\xi}^n ) * (\ast^n \hat f) \Big)(\xi)\\
&\hspace{1in}+ A_{\mu}\sum_{n = 0}^\infty \frac{(-1)^{n+1}}{n!} \Big((\ i \sgn(\xi) e^{-h_2 \abs{\xi }} \ \abs{ \xi }^n \hat \omega_2) \ast\widehat{\partial_\alpha f} * (\ast^n \hat f) \Big)(\xi) \nonumber.
\end{align}
We can now use the relations \eqref{omega2linearnonlinear} and \eqref{Omega1linearnonlinear} to say
\begin{align*}
    L(\hat \omega_2) &= -A_\kappa e^{-h_2 \abs{\xi} } ( L(\hat \omega_1)(\xi) + N(\hat \omega_1)(\xi) )\\
    &= A_\kappa A_\mu e^{-2h_2\abs{\xi} }(L(\hat \omega_2)(\xi)+ N(\hat \omega_2)(\xi)) + 2A_\kappa A_\rho e^{-h_2 \abs{\xi} } \widehat{\partial_{\alpha}f}(\xi) - A_\kappa e^{-h_2 \abs{\xi}} N(\hat \omega_1)
\end{align*}
Combining terms and solving for $L(\hat \omega_2)$ we find
\begin{equation}\label{Omega2linearpart}
    L(\hat \omega_2)(\xi) = 2A_\kappa A_\rho  \widehat{\partial_{\alpha}f}(\xi) \frac{e^{h_2 \abs{\xi}}}{e^{2h_2\abs{\xi}} - A_\kappa A_\mu} - A_\kappa N(\hat \omega_1)(\xi) \frac{e^{h_2 \abs{\xi}}}{e^{2h_2\abs{\xi}} - A_\kappa A_\mu} - N(\hat \omega_2)(\xi) \frac{A_\kappa A_\mu}{e^{2h_2\abs{\xi}} - A_\kappa A_\mu}.
\end{equation}
With \eqref{Omega2linearpart} in hand, we are ready to begin analyzing the interface decay. Differentiating in time, we obtain
\begin{align*}
    \frac{d}{dt} \norm{f}_{\fsonenu} &= \frac{d}{dt} \int_\R e^{\nu t\abs{\xi} } \abs{\xi}^s|\hat{f}(\xi)|\; d\xi \\
    &= \nu \int_\R e^{\nu t\abs{\xi} } \abs{\xi}^{s+1}|\hat{f}(\xi)|\; d\xi + \frac{1}{2}\int_\R e^{\nu t\abs{\xi} } \abs{\xi}^s\frac{\hat{f}(\xi)\overline{\widehat{\partial_{t}f}(\xi)} 
    +\widehat{\partial_{t}f}(\xi)\overline{\hat{f}(\xi)}}{|\hat{f}(\xi)|}\; d\xi.
\end{align*}
Next, to obtain the decay term in the expression above, we need to decompose the evolution equation for the interface into the linear and nonlinear terms
\begin{align*}
    \widehat{\partial_t f}(\xi) &= \frac{1}{2\pi} ( \hat I_1(\xi) + \hat I_2(\xi) + \hat I_3(\xi) + \hat I_4(\xi ) ).
\end{align*}
Defining $N_0$ as
\begin{align*}
    I_1(\alpha) &= \pv \int_\R \frac{\omega_1(\alpha - \beta)}{ \beta}\; d\beta - \int_\R \frac{\Delta_\beta f(\alpha)^2}{1 + \Delta_\beta f(\alpha)^2} \frac{\omega_1(\alpha - \beta)}{\beta}\; d\beta\\
    &\eqdef \pi H(\omega_1)(\alpha) + 2\pi N_0(\alpha),
\end{align*}
where $H$ is the Hilbert transform, we use equation \eqref{Omega1linearnonlinear} to find
\begin{align}
    \begin{split}
    \label{eq:Ionehat_first}
        \frac{1}{2\pi} \hat I_1(\xi) &= \frac{1}{2} \widehat{H(\omega_1)}(\xi) + \hat N_0(\xi) \\
        &= -A_\rho |\xi| \hat f(\xi) + \frac{i}{2}\sgn(\xi) A_\mu e^{-h_2|\xi|} \hat \omega_2(\xi) -  \frac{i}{2}\sgn(\xi) N( \hat \omega_1)(\xi) +  \hat N_0(\xi).
    \end{split}
\end{align}
Expanding $\hat \omega_2$ via \eqref{omega2linearnonlinear} and \eqref{Omega2linearpart} gives
\begin{align}
    \label{Ionehat_second}
    \frac{i}{2}\sgn(\xi) A_\mu e^{-h_2|\xi|} \hat \omega_2(\xi)  
    &= -A_\rho |\xi| \hat f(\xi) \frac{A_\kappa A_\mu}{e^{2h_2\abs{\xi}} - A_\kappa A_\mu} - A_\mu(\hat N_1 + \hat N_2 + \hat N_3)(\xi)
\end{align}
in which
\begin{align}
\label{N1}
    \hat N_1(\xi) &= -\frac{i}{2}\sgn(\xi) e^{-h_2\abs{\xi}} N(\hat \omega_2)(\xi),\\
\label{N2}
    \hat N_2(\xi) &= \frac{i}{2} \sgn(\xi) \frac{A_\kappa}{e^{2h_2\abs{\xi}} - A_\kappa A_\mu} N(\hat \omega_1)(\xi), \\
\label{N3}
    \hat N_3(\xi) &= \frac{i}{2} \sgn(\xi) \frac{A_\kappa A_\mu e^{-h_2\abs{\xi}}}{e^{2h_2\abs{\xi}} - A_\kappa A_\mu}N(\hat \omega_2)(\xi).
\end{align}
Combining \eqref{eq:Ionehat_first} and \eqref{Ionehat_second} leads to
\begin{align*}
    \frac{1}{2\pi}\hat I_1(\xi) = -A_\rho |\xi| \hat f(\xi) \left (1 + \frac{A_\kappa A_\mu}{e^{2h_2 \abs{\xi}} - A_\kappa A_\mu} \right) +  (\hat N_0 + \hat N_1 + \hat N_2 + \hat N_3)(\xi) - \frac{i}{2} \sgn(\xi) N(\hat\omega_1)(\xi) .
\end{align*}

Next,
\begin{align}
\label{I3linearnonlinear}
     I_3(\alpha) &= \int_\R \frac{\beta}{\beta^2 + h_2^2}\, \omega_2(\alpha - \beta) \; d\beta - \int_\R \frac{f(\alpha) + 2h_2}{\beta^2 + (f(\alpha) + h_2)^2} \frac{\beta f(\alpha) }{\beta^2 + h_2^2}\, \omega_2(\alpha - \beta)\; d\beta,
\end{align}
denoting the nonlinear part as 
\[
     N_4 = -\frac{1}{2\pi} \int_\R \frac{f(\alpha) + 2h_2}{\beta^2 + (f(\alpha) + h_2)^2} \frac{\beta f(\alpha) }{\beta^2 + h_2^2}\, \omega_2(\alpha - \beta)\; d\beta,
\]
we apply \eqref{fouriercalc2} and \eqref{Omega2linearpart} once again
\begin{align}
    \frac{1}{2\pi} \hat I_3(\xi) &= -\frac{i}{2} \sgn(\xi) e^{-h_2\abs\xi} \hat \omega_2(\xi) + \hat N_4(\xi) \nonumber\\
     \label{eq:I3hat}
    &= A_\rho |\xi| \hat f(\xi) \frac{A_\kappa }{e^{2h_2\abs{\xi}} - A_\kappa A_\mu} +  (\hat N_1 + \hat N_2 + \hat N_3 + \hat N_4)(\xi).
\end{align}
Collecting terms we have
\begin{align}
\label{interface_neat}
\begin{split}
    \widehat{\partial_t f}(\xi) &= - A_\rho |\xi| \hat f(\xi) \left(1 - \frac{A_\kappa(1-A_\mu) }{e^{2h_2\abs{\xi}} - A_\kappa A_\mu}\right) + \frac{1}{2\pi}(\hat I_2 + \hat I_4)(\xi)\\
    & \hspace{2.5em} +\hat N_0 (\xi) +  (1 + A_\mu)(\hat N_1 + \hat N_2 + \hat N_3)(\xi)  + \hat N_4(\xi) - \frac{i}{2} \sgn(\xi) N(\omega_1)(\xi).
    %
\end{split}
\end{align}

In \eqref{interface_neat}, the linear terms will give the decay of the interface as long as the nonlinear terms are sufficiently bounded. So let us bound the nonlinear terms by following analogous computations to those in Section \ref{potentialjuimpestimates}. In all the nonlinear bounds in this section, the constants grow arbitrarily small as $\norm{f}_{\fzeronenu} + \norm{f}_{\foneonenu} \to 0$.

Defining
\begin{align}
\label{C5}
C_5 &= \bnorm{\frac{A_\kappa A_\mu e^{-h_2\abs{\xi}}}{e^{2h_2\abs{\xi}} - A_\kappa A_\mu}}_{L^\infty}\\
\label{C6}
C_{6} &= \frac{\|f\|_{\foneonenu}}{1-\|f\|_{\foneonenu}^{2}} \to 0 \quad \text{as} \quad \norm{f}_{\foneonenu} \to 0
\end{align}
we have the estimates
\begin{align}\label{I2fzeroone}
    \frac{1}{2\pi}\norm{I_2}_{\fzeronenu} &\leq \frac{1}{2}  \frac{\norm{f}_{\foneonenu}^2} {1- \norm{f}_{\foneonenu}^2} \norm{\omega_1}_{\fzeronenu}   \leq
     A_\rho C_1 C_6 \norm{f}_{\foneonenu}^2, \\
     \frac{1}{2\pi}\norm{I_2}_{\foneonenu} &\leq \frac{\norm{f}_{\foneonenu}}{( 1 - \norm{f}_{\foneonenu}^2)^2}( \norm{\omega_1}_{\fzeronenu} \norm{f}_{\ftwonenu} + \frac{1}{2}(1- \norm{f}_{\foneonenu}^2) \norm{f}_{\foneonenu} \norm{\omega_1}_{\foneonenu}) \nonumber\\
     &\leq   2A_\rho C_1(1 + \frac{1}{2}C_3(1 - \norm{f}_{\foneonenu}^2))C_6^2 \norm{f}_{\ftwonenu}.    \label{I2foneone}
\end{align}
Next, similar to \eqref{omega12calc},
\begin{align*}
    \hat I_4(\xi) = \pi \sum_{n = 0}^{\infty} \frac{(-1)^n}{n!}\Big((\hat \omega_2 \ e^{-h_2\abs{\cdot } }\ \abs{\cdot}^n ) * (\ast^n \hat f)  * \widehat{\partial_\alpha f} \Big)(\xi)
\end{align*}
the bounds on $I_{4}$ are
\begin{align}
    \frac{1}{2\pi }\norm{I_4}_{\fzeronenu} & \leq \frac{1}{2} C_0 \norm{f}_{\foneonenu} \norm{\omega_2}_{\fzeronenu} \nonumber \\
    &\leq   A_\rho \abs{A_\kappa} C_0^2 C_1 \norm{f}_{\foneonenu}^2    \label{I4fzeroone}\\
    \frac{1}{2\pi}\norm{I_4}_{\foneonenu} & \leq A_\rho \abs{A_\kappa} C_0 C_1 (C_0 + C_2) \norm{f}_{\foneonenu}\norm{f}_{\ftwonenu} + C_0 \norm{f}_{\foneonenu} \norm{\omega_2}_{\foneonenu} \nonumber \\
    &\leq  A_\rho \abs{A_\kappa} C_0C_1(C_0 + C_2 +C_4) \norm{f}_{\foneonenu}\norm{f}_{\ftwonenu} \nonumber\\
    &\eqdef A_\rho \abs{A_\kappa} C_1\lambda_0   \norm{f}_{\ftwonenu}  \label{I4foneone}
\end{align}
where
\begin{equation}\label{lambda0}
\lambda_{0} =  C_0(C_0 + C_2 +C_4)\norm{f}_{\foneone} \to 0 \quad \text{as} \quad \norm{f}_{\fzeronenu} + \norm{f}_{\foneonenu} \to 0.
\end{equation}
Next, like the term $\Omega_{111}$, 
\[
    |\hat N_0| \leq \frac{1}{2} \sum_{n =0}^\infty ((\ast^{2n+2} |\widehat{\partial_\alpha f}|)*|\hat \omega_1|)(\xi) 
\]
which leads to
\begin{align*}
    \norm{N_0}_{\fzeronenu} &\leq A_\rho C_1 C_6 \norm{f}_{\foneonenu}^2\\
    \norm{N_0}_{\foneonenu} &\leq A_\rho C_1 C_{6}\Big(C_3 \norm{f}_{\foneonenu} + \frac{2C_6}{\norm{f}_{\foneonenu}} \Big)\norm{f}_{\ftwonenu}.
\end{align*}
Now, reusing techniques from the Fourier transforms in section \ref{potentialjuimpestimates}, we have
\begin{align*}
    -2\pi \hat N_{4}(\xi) &= \calF \left[\int_\R d\beta \; \frac{f(\alpha) + 2h_2}{\beta^2 + (f(\alpha) + h_2)^2} \frac{\beta f(\alpha) }{\beta^2 + h_2^2} \omega_2(\alpha - \beta) \right](\xi)\\
    &= \int_\R d\beta \;  \left( \calF\left(\frac{f(\alpha) + 2h_2}{\beta^2 + (f + h_2)^2} \frac{\beta f(\alpha)}{\beta^2 + h_2^2} \right) * \widehat{\tau_\beta \omega_2}\right)(\xi) \\
    &= \int_\R d\beta \int_\R d\xi_1  \; \calF\left( \frac{(f(\alpha)  + 2h_2)}{\beta^2 + (f(\alpha) + h_2)^2} \frac{\beta f(\alpha)}{\beta^2 + h_2^2} \right)(\xi - \xi_1) \   \ e^{-i\beta \xi_1}\hat \omega_2(\xi_1)\\
    &= \int_\R d\xi_1 \;  \hat \omega_2 (\xi_1) \int_\R d\alpha \; e^{-i(\xi - \xi_1)\alpha}f(\alpha)  \int_\R d\beta \;e^{-i\beta \xi_1} \frac{f(\alpha) + 2h_2}{\beta^2 + (f(\alpha) + h_2)^2} \frac{\beta }{\beta^2 + h_2^2}.
\end{align*}
By \eqref{fouriercalc1} and \eqref{fouriercalc2} the integral in $\beta$ is the convolution
\begin{align*}
   T(\xi_1) &\eqdef \int_\R d\beta \;e^{-i\beta \xi_1} \frac{f(\alpha) + 2h_{2}}{\beta^2 + (f(\alpha) + h_2)^2} \frac{\beta}{\beta^2 + h_2^2}\\
   &= \Bigg(\frac{\pi (f(\alpha)+2h_2)}{f(\alpha) + h_2} e^{-(f(\alpha) + h_2)\abs{\cdot} } \ast -i\pi \sgn(\cdot) e^{-h_2 \abs{\cdot}}\Bigg)(\xi_1)
\end{align*}
which can be calculated via the identity
\begin{align*}
    \Big( e^{-a |\cdot| } * \sgn(\cdot) e^{-b |\cdot| } \Big) (x)= - \sgn(x) 
    \frac{2a(e^{-a|x|} - e^{-b|x|})}{a^2-b^2}
\end{align*}
as 
\begin{align*}
    T(\xi_1) &= i\pi^2 \frac{ f(\alpha) + 2h_2}{f(\alpha) + h_2}\cdot \sgn(\xi_1) \frac{2(f(\alpha) + h_2)(e^{-(f(\alpha) + h_2)\abs{\xi_1}} - e^{-h_2|\xi_1|})}{(f(\alpha) + h_2)^2 - h_2^2}\\
    &= 2i\pi^2 \sgn(\xi_1) e^{-h_2|\xi_1|}  \frac{(e^{-f(\alpha)\abs{\xi_1}} - 1)}{f(\alpha)}\\
    &= 2i\pi^2 \sgn(\xi_1) e^{-h_2|\xi_1|} \sum_{n=1}^\infty \frac{(-f(\alpha))^{n-1} |\xi_1|^n}{n!} 
\end{align*}
and so
\begin{align*}
     \hat N_{4}(\xi) &= -i\pi\sum_{n=1}^\infty  \frac{(-1)^{n-1}}{n!} \int_\R d\xi_1 \; \hat \omega_2(\xi_1) \sgn(\xi_1) \abs{\xi_1}^n e^{-h_2|\xi_1|} \int_\R d\alpha \; e^{-i(\xi - \xi_1)\alpha}f(\alpha)^{n}.
\end{align*}
Hence,
\begin{align}
\label{N4hat}
    \hat N_4(\xi) = i\pi \sum_{n=1}^\infty \frac{(-1)^n}{n!} \Big( (\hat \omega_2\ \sgn(\xi) \ \abs{\xi}^n\ e^{-h_2 \abs{\xi}} ) * (\ast^n \hat f) \Big) (\xi).
\end{align}
Using \eqref{interpolation} gives us the bounds
\begin{align*}
    \norm{N_4}_{\fzeronenu} &\leq \pi (C_0 - 1) 
    \norm{\omega_2}_{\fzeronenu} \nonumber\\
    &\leq 2\pi A_\rho \abs{A_\kappa} C_0(C_0-1)C_1 \norm{f}_{\foneonenu},\\
    \norm{N_4}_{\foneonenu} &\leq 2\pi A_\rho \abs{A_\kappa} C_0 C_1 C_2 \norm{f}_{\ftwonenu} + \pi (C_0 - 1) \norm{\omega_2}_{\foneonenu} \nonumber\\
    &\leq 2\pi A_\rho \abs{A_\kappa} C_1(C_0 C_2 + (C_0 - 1)C_4)\norm{f}_{\ftwonenu}.
\end{align*}
At this point, bounds on $N(\omega_1)$ and $N(\omega_2)$ will lead to quick estimates of all the remaining terms. In $\calF^{0,1}$
\begin{align*}
    \norm{N(\omega_2)}_{\fzeronenu} &\leq \abs{A_\kappa} (C_0 - 1) \norm{\omega_1}_{\fzeronenu} \nonumber\\
    &\leq 2A_\rho | A_\kappa| (C_0 - 1) C_1\norm{f}_{\foneonenu}
\end{align*}
and
\begin{align*}
    \norm{N(\omega_1)}_{\fzeronenu} 
    &\leq 2 |A_\mu| C_6 \norm{\omega_1}_{\fzeronenu} + |A_\mu| (C_0 \norm{f}_{\foneonenu} + (C_0-1) ) \norm{\omega_2}_{\fzeronenu} \nonumber\\
    &\leq A_\rho |A_\mu|C_1 \lambda_1  \norm{f}_{\foneonenu}
\end{align*}
where the second inequality follows from  \eqref{omega1fzeronenubound} and \eqref{eq:omega2fzerone}. Above,
\begin{align*}
    \lambda_1 = 4C_6 + 2\abs{A_\kappa} C_0 (C_0 \norm{f}_{\foneonenu} + (C_0 - 1)) \to 0 \quad \text{as} \quad \norm{f}_{\fzeronenu} + \norm{f}_{\foneonenu} \to 0.
\end{align*}
Next in $\calF^{1,1}$
\begin{align*}
    \norm{N(\omega_2)}_{\foneonenu} &\leq \abs{A_\kappa} \Big((C_0 - 1) \norm{\omega_1}_{\foneonenu} + \frac{C_2}{\norm{f}_{\fzeronenu}} \norm{f}_{\foneonenu} \norm{\omega_1}_{\fzeronenu} \Big)\\
    &\leq A_\rho \abs{A_\kappa}  C_1 \lambda_2 \norm{f}_{\ftwonenu}
\end{align*}
where $\lambda_2$ is defined by applying \eqref{omega1fzeronenubound}, \eqref{omega1foneonenubound}, and then \eqref{interpolation}
\begin{align*}
    \lambda_2 = 2(C_0 - 1)C_3 + 2C_2 \to 0 \quad \text{as} \quad \norm{f}_{\fzeronenu} + \norm{f}_{\foneonenu} \to 0.
\end{align*}
Finally, calculating as in \eqref{omega12foneone},
\begin{align*}
\begin{split}
      \norm{N(\omega_1)}_{\foneonenu}  &\leq  \norm{\omega_{11}}_{\foneonenu} + |A_\mu| \Big( (C_0 + C_2) \norm{\omega_2}_{\fzeronenu} \norm{f}_{\ftwonenu} + C_0 \norm{f}_{\foneonenu} \norm{\omega_2}_{\foneonenu} +\\
      & \hspace{12em} \frac{C_2}{\norm{f}_{\fzeronenu} } \norm{f}_{\foneonenu} \norm{\omega_2}_{\fzeronenu} + (C_0 - 1)\norm{\omega_2}_{\foneonenu} \Big)
\end{split}\\
      &\leq A_\rho \abs{A_\mu} C_1 \lambda_3 \norm{f}_{\ftwonenu}
\end{align*}
in which $\lambda_3$ is defined by using \eqref{omega1fzeronenubound}, \eqref{omega1foneonenubound}, \eqref{omega11foneone}, \eqref{eq:omega2fzerone}, \eqref{omega2foneone},
\begin{align*}
\lambda_3 &= 4 \frac{1 + \norm{f}_{\foneonenu}^2} {(1-\norm{f}_{\foneonenu}^2)^2} \norm{f}_{\foneonenu}  + 4C_3 C_6 + 2\abs{A_\kappa} (C_0(C_0+C_2 + C_4) \norm{f}_{\foneonenu}\\
&\hspace{5em} + 2\abs{A_\kappa}( C_0C_2 + (C_0-1) C_4),
\end{align*}
and $\lambda_3 \to 0$ as $\norm{f}_{\fzeronenu} + \norm{f}_{\foneonenu} \to 0$. 

Now, fix
\begin{align}
\label{infconst}
    \theta = \inf_{\xi \in \R} \left(1 - \frac{A_\kappa(1-A_\mu) }{e^{2h_2\abs{\xi}} - A_\kappa A_\mu}\right) 
\end{align}
and note that $\theta > 0$ since $|A_\kappa| < 1$. Then by \eqref{interface_neat} and the above estimates
\begin{align*}
    \frac{d}{dt} \norm{f}_{\fzeronenu} &\leq (-A_\rho\theta + \nu) \norm{f}_{\foneonenu} + A_\rho \sigma_0 \norm{f}_{\foneonenu}
\end{align*}
where
\begin{align}
\label{sigma0}
    \sigma_0 &\eqdef \sigma_{0}(\|f\|_{\fzeronenu}, \|f\|_{\foneonenu}) \nonumber\\ 
    \begin{split}
    &= C_1 \Big[ (C_6 + \abs{A_\kappa} C_0^2) \norm{f}_{\foneonenu}\\
    &\hspace{2.5em} + 
    \frac{1}{2} \abs{1+ A_\mu} \left( 2|A_\kappa| (C_0 - 1) + C_5 \Big(2 \abs{A_\kappa} (C_0 -1) + \lambda_1 \Big) \right)\\
    &\hspace{3.5em} + 2\pi \abs{A_\kappa} C_0(C_0-1) + \abs{A_\mu}\lambda_1 +C_6 \norm{f}_{\foneonenu} \Big]
    \end{split}
\end{align}
and
\begin{align*}
    \frac{d}{dt} \norm{f}_{\foneonenu} &\leq (-A_\rho\theta + \nu) \norm{f}_{\ftwonenu} + A_\rho \sigma_1 \norm{f}_{\ftwonenu}
\end{align*}
where
\begin{align}
\label{sigma1}
    \sigma_1&\eqdef \sigma_{1}(\|f\|_{\fzeronenu}, \|f\|_{\foneonenu}) \nonumber\\ 
    \begin{split}
    &= C_1 \Big[ 2(1+\frac{1}{2}C_3 (1-\norm{f}_{\foneonenu}^2))C_6^2 + |A_\kappa| \lambda_0 \\
    &\hspace{2.5em} + \frac{1}{2} \abs{1+ A_\mu} \left( |A_\kappa| \lambda_2 + C_5 \Big(|A_\kappa| \lambda_2 + \lambda_3 \Big) \right)\\
    &\hspace{3.5em} + 2\pi \abs{A_\kappa} (C_0 C_2 + (C^2_0 - 1)C_4) + \frac{1}{2} \abs{A_\mu} \lambda_3 + C_3C_6 \norm{f}_{\foneonenu} + \frac{2C_6^2}{\norm{f}_{\foneonenu}}\Big].
    \end{split}
\end{align}
Note that $\sigma_{i}(\|f\|_{\fzeronenu}, \|f\|_{\foneonenu})$, $i=1,2$, are continuous functions in $(\|f\|_{\fzeronenu}, \|f\|_{\foneonenu})$ such that $\sigma_{i}(0,0) = 0$.

\section{$L^{2}$ Estimates}
\label{sec:l2est}

\subsection{Analytic Estimates}
In this section, we will prove the $L^{2}$ estimate \eqref{L2inequality} of Theorem \ref{MainTheorem}. We will introduce the notation $\lesssim$ which indicates a constant depending on $\norm{f}_{\fzeronenu}$ and $\norm{f}_{\foneonenu}$. Let us begin by differentiating
\begin{align}
\label{interfaceL2}
    \frac{1}{2} \frac{d}{dt}\norm{f}_{L^2_\nu}^2(t) = \nu \norm{f}_{\Hdot}^2 - A_\rho \int_\R e^{2\nu t |\xi|} \abs{\xi} \abs{\hat f(\xi)}^2 \left(1- \frac{A_\kappa(1-A_\mu)}{e^{2h_2\abs{\xi}} - A_\kappa A_\mu}\right) \; d\xi + \langle \hat f, \text{h.o.t.} \rangle_{L^2_\nu}
\end{align}
in which the higher order terms are 
\begin{align*}
\begin{split}
    \text{h.o.t.} &= \frac{1}{2\pi}(\hat I_2 + \hat I_4)
     + \hat N_0(\xi) + (1 + A_\mu)(\hat N_1 + \hat N_2 + \hat N_3)(\xi)  + \hat N_4(\xi) - \frac{i}{2} \sgn(\xi) N(\hat \omega_1)(\xi).
\end{split}
\end{align*}
In the following estimates, we will use the convention $\tilde{g}(x) = g(-x)$ and the convolution identities
\begin{align*}
   \int_{\mathbb{R}} g_{1}(x)(g_{2}\ast g_{3})(x) dx &= \int_{\mathbb{R}} g_{1}(x)\int_{\mathbb{R}}g_{2}(x-y)g_{3}(y) dy dx\\
   &= \int_{\mathbb{R}}g_{3}(y)  \int_{\mathbb{R}}g_{1}(x) g_{2}(x-y)dx dy\\
   &=  \int_{\mathbb{R}} g_{3}(y)(g_{1}\ast \tilde{g_{2}})(y) dy
\end{align*}
and
\begin{align*}
\widetilde{g_{1}\ast g_{2}} = \tilde{g_{1}}\ast \tilde{g_{2}}.
\end{align*}
Using the substitution $\abs{\hat \omega_1(\xi)} = \abs{\xi} \abs{\hat \Omega_1(\xi)}$, we obtain
\begin{align}
    \langle \hat f, \hat I_2 \rangle_{L^2_\nu} &\leq \int_\R e^{2\nu t\abs{\xi} } \abs{\hat f(\xi)} \cdot \pi \sum_{n=0}^\infty \Big((\ast^{2n+2} \abs{\widehat {\partial_\alpha f}}) * \abs{\cdot} \abs{\hat \Omega_1} \Big)(\xi)\; d\xi \nonumber\\
    &\leq \pi \sum_{n=0}^\infty \int_\R e^{\nu t\abs{\cdot} } \abs{\hat f(\xi)} \Big((\ast^{2n+2} e^{\nu t\abs{\cdot} }\abs{\widehat {\widetilde{\partial_\alpha f}}}) * e^{\nu t\abs{\cdot} }\abs{\cdot} \abs{\hat \Omega_1} \Big)(\xi)\; d\xi \nonumber\\
    &\leq \pi \sum_{n=0}^\infty \int_\R e^{\nu t\abs{\cdot} } \abs{\xi}\abs{\hat \Omega_1(\xi)} \Big((\ast^{2n+2} e^{\nu t\abs{\cdot} }   \abs{\widehat {\widetilde{\partial_\alpha f}}}) * e^{\nu t\abs{\cdot} }\abs{\hat f} \Big)(\xi)\; d\xi \nonumber\\
    &\leq \pi \sum_{n=0}^\infty \norm{\Omega_1}_{\Hdot} \bnorm{\abs{\xi}^{1/2}\Big((\ast^{2n+2} e^{\nu t\abs{\cdot} }   \abs{\widehat {\widetilde{\partial_\alpha f}}}) * e^{\nu t\abs{\cdot} }\abs{\hat f} \Big)}_{L^2} \nonumber\\
    &\leq \pi  \sum_{n=0}^\infty  \norm{\Omega_1}_{\Hdot}  \norm{f}_{\foneonenu}^{2n+1}\Big((2n+2) \norm{f}_{\calF^{3/2,1}_\nu} \norm{f}_{L^2_\nu} + \norm{f}_{\foneonenu} \norm{f}_{\Hdot} \Big)\nonumber\\
    \begin{split}
    \label{youngsexample}
    &\leq \pi \sum_{n=0}^\infty \frac{\epsilon_n}{2} \norm{\Omega_1}_{\Hdot}^2 + \frac{1}{2\epsilon_n} (2n+2)^2 \norm{f}_{\foneonenu}^{4n+2} \norm{f}_{\calF^{3/2,1}_\nu}^2 \norm{f}_{L^2_\nu}^2\\
    &\hspace{5em} + \norm{f}_{\foneonenu}^{2n+2} \norm{\Omega_1}_{\Hdot}\norm{f}_{\Hdot}
    \end{split}
\end{align}
where the last inequality is from Young's inequality for products where we choose $\epsilon_n = \epsilon/(1+n^2)$ for a small value $\epsilon> 0$. 
The other non-linear terms can be estimated via similar methods
\begin{align*}
    \begin{split}
    \langle \hat f, \hat I_4 \rangle_{L^2_\nu} &\leq \pi \sum_{n=0}^\infty \dnorm{ e^{-h_2 \abs \xi} \frac{\abs{\xi}^{n}}{n!}}_{L^\infty} \norm{\Omega_2}_{\Hdot} \norm{f}_{\fzeronenu}^{n}\Big( \norm{f}_{\fthreehalvesonenu} \norm{f}_{L^2_\nu}
    + (n+1) \norm{f}_{\foneonenu}
    \norm{f}_{\Hdot}\Big)
    \end{split}
\end{align*}
and
\begin{align*}
    \langle \hat f, \hat N_1 \rangle_{L^2_\nu} &\leq \frac{1}{2} \int_{\R} d\xi \; e^{2\nu t\abs{\xi}} e^{-h_2\abs{\xi}} \abs{\hat f(\xi)} \abs{N(\widehat{\partial_\alpha \Omega_2})(\xi)} \\
    &\leq \frac{\abs{A_\kappa}}{2} \sum_{n=1}^{\infty} \int_{\R} d\xi \; e^{2\nu t\abs{\xi}} e^{-2h_2\abs{\xi}} \frac{|\xi|^{n}}{n!} \cdot \abs{\hat f(\xi)}   \Big((\ast^{n}\abs{\hat{f}}) \ast \abs{\cdot} \abs{\hat \Omega_1} \Big)(\xi) \\
    &\leq  \frac{\abs{A_\kappa}}{2} \sum_{n=1}^{\infty} \dnorm{e^{-2h_2\abs{\xi}} \frac{|\xi|^{n}}{n!}}_{L^\infty} \int_{\R} d\xi \; e^{2\nu t\abs{\xi}} \abs{\xi} \abs{\hat \Omega_1(\xi)}   (\ast^{n}\abs{\hat{\tilde{f}}} * \abs{\hat f})(\xi) \\
    &\leq \frac{\abs{A_\kappa}}{2} \sum_{n=1}^{\infty} \dnorm{e^{-2h_2\abs{\xi}} \frac{|\xi|^{n}}{n!}}_{L^\infty} (n+1)\norm{f}_{\fzeronenu}^n \norm{\Omega_1}_{\Hdot} \norm{f}_{\Hdot}\\
    &\eqdef \abs{A_\kappa} \lambda_4 \norm{\Omega_1}_{\Hdot} \norm{f}_{\Hdot}
\end{align*}
where $\lambda_4 \to 0$ as $\norm{f}_{\fzeronenu} + \norm{f}_{\foneonenu} \to 0$. Next
\begin{align}
\begin{split}
    \label{prodfN2}
    \langle \hat f, \hat N_2 \rangle_{L^2_\nu} &\leq \frac{1}{2} C_5 \Big( \norm{\Omega_1}_{\Hdot}\sum_{n=0}^\infty 2 (\norm{f}_{\foneonenu}^{2n+1} \norm{f}_{\Hdot} + (2n+1) \norm{f}_{\foneonenu}^{2n} \norm{f}_{\fthreehalvesonenu}\norm{f}_{L^2_\nu}) \\
    &\hspace{3em} + \norm{\Omega_1}_{\Hdot}\sum_{n= 0}^\infty \dnorm{ e^{-h_2|\xi|} \frac{|\xi|^n}{n!} }_{L^\infty} \norm{f}_{\fzeronenu}^n( \norm{f}_{\fthreehalvesonenu} \norm{f}_{L^2_\nu} + (n+1) \norm{f}_{\foneonenu} \norm{f}_{\Hdot}) \\
    &\hspace{5em} + \norm{\Omega_2}_{\Hdot}\sum_{n=1}^\infty  \dnorm{ e^{-h_2|\xi|} \frac{|\xi|^n}{n!} }_{L^\infty}   (n+1) \norm{f}_{\fzeronenu}^n \norm{f}_{\Hdot} \Big)
    \end{split}\\
    &\eqdef \frac{1}{2} C_5\left(\norm{\Omega_1}_{\Hdot} (\lambda_5 \norm{f}_{\Hdot} + \lambda_6 \norm{f}_{\fthreehalvesonenu}\norm{f}_{L^2_\nu} ) + \lambda_7  \norm{\Omega_2}_{\Hdot} \norm{f}_{\Hdot} \right), \nonumber \\
    \langle \hat f, \hat N_3 \rangle_{L^2_\nu} &\leq \frac{1}{2} C_5 \int_{\R} d\xi \; e^{2\nu t\abs{\xi}} e^{-h_2\abs{\xi}} \abs{\hat f(\xi)} \abs{N(\hat \Omega_2)(\xi)} \nonumber \\
    &\leq \frac{1}{2} C_5 \abs{A_\kappa} \lambda_4 \norm{\Omega_1}_{\Hdot} \norm{f}_{\Hdot}, \nonumber \\
    \langle \hat f, \hat N_4 \rangle_{L^2_\nu} &\leq \pi \sum_{n=1}^\infty \dnorm{e^{-h_2\abs{\xi}} \frac{|\xi|^{n}}{n!}}_{L^\infty} (n+1) \norm{f}_{\fzeronenu}^n \norm{\Omega_2}_{\Hdot} \norm{f}_{\Hdot}, \nonumber
\end{align}
in which 
\begin{align}
    \lambda_5, \lambda_7 \to 0 \quad \text{and} \quad \lambda_6 \lesssim 1 \quad \text{as} \quad \norm{f}_{\fzeronenu} + \norm{f}_{\foneonenu} \to 0.
\end{align}

Related to the the computations in \eqref{prodfN2},
\begin{align*}
     \langle \hat f, \hat N_0 \rangle_{L^2_\nu} &\leq \frac{1}{2} \norm{\Omega_1}_{\Hdot}\sum_{n=0}^\infty \norm{f}_{\foneonenu}^{2n+2} \norm{f}_{\Hdot} + (2n+2) \norm{f}_{\foneonenu}^{2n+1} \norm{f}_{\fthreehalvesonenu}\norm{f}_{L^2_\nu},\\
    \langle \hat f, N(\widehat{\partial_\alpha \Omega_1 })\rangle_{L^2_\nu} &\leq \norm{\Omega_1}_{\Hdot} (\lambda_5 \norm{f}_{\Hdot} + \lambda_6 \norm{f}_{\fthreehalvesonenu} \norm{f}_{L^2_\nu} ) + \lambda_7  \norm{\Omega_2}_{\Hdot} \norm{f}_{\Hdot}.
\end{align*}
So it is enough to control $\Omega_1$ and $\Omega_2$. We can write
\begin{align*}
    \hat \Omega_1 = \hat \Omega_{11} + \hat \Omega_{12} - 2A_\rho \hat f
\end{align*}
Using \eqref{omega12fourierlinearpartsum} and \eqref{Omega11abs} and  we have
\begin{align}
    \norm{\Omega_{11}}_{\Hdot}  &\leq \abs{A_\mu} \sum_{n=0}^\infty \norm{f}^{2n+1}_{\foneonenu} \norm{\Omega_1}_{\Hdot} + (2n + 1) \norm{f}_{\foneonenu}^{2n} \norm{f}_{\fthreehalvesonenu}\norm{\Omega_1}_{L^2_\nu} \nonumber\\
    \label{Omega11Hdotnu}
    &\defeq \abs{A_\mu} (C_6\norm{\Omega_1}_{\Hdot} + C_7 \norm{\Omega_1}_{L^2_\nu})\\
    \label{Omega12Hdotnu}
    \norm{\Omega_{12}}_{\Hdot} &\leq \abs{A_\mu} \Big(C_0 + C_8 \Big) \norm{\Omega_2}_{\Hdot}
\end{align}
where 
\begin{align*}
    C_7 &\to \norm{f}_{\fthreehalvesonenu},\\
    C_8 &= \sum_{n = 1}^\infty n \bnorm{e^{-h_2\abs{\xi}} \frac{\abs{\xi}^{n-1/2}}{n!} }_{L^\infty} \norm{f}_{\fzeronenu}^{n-1}\norm{f}_{\fonehalfonenu} \to \frac{1}{\sqrt{2 e h_2} } \norm{f}_{\fonehalfonenu} \to 0
\end{align*}
as $\norm{f}_{\fzeronenu} + \norm{f}_{\foneonenu} \to 0$. 
With \eqref{Omega21fourierexpansion} and \eqref{Omega22fourierexpansion} we have the bounds
\begin{align}
    \norm{\Omega_2}_{\Hdot} &\leq \abs{A_\kappa} C_0(1 + \norm{f}_{\foneonenu})\norm{\Omega_1}_{\Hdot} \nonumber \\
    &\hspace{2.5em} + \abs{A_\kappa} \Big(\frac{C_2}{\norm{f}_{\fzeronenu}} \norm{f}_{\fonehalfonenu} (1 + \norm{f}_{\foneonenu}) + C_0 \norm{f}_{\fthreehalvesonenu}\Big) \norm{\Omega_1}_{L^2_\nu} \nonumber \\
    \label{Omega2Hdotnu}
    &\eqdef \abs{A_\kappa}( C_9 \norm{\Omega_1}_{\Hdot} + C_{10}\norm{\Omega_1}_{L^2_\nu}) 
\end{align}
with $C_9 \to 1$ and $C_{10} \to \abs{A_\kappa} \norm{f}_{\fthreehalvesonenu}$ as $\norm{f}_{\fzeronenu} + \norm{f}_{\foneonenu} \to 0$.
Similarly to the above calculations, we derive
\begin{align*}
    \norm{\Omega_2}_{L^2_\nu} &\leq \abs{A_\kappa} C_0(1 + \norm{f}_{\foneonenu}) \norm{\Omega_1}_{L^2_\nu} = \abs{A_\kappa} C_9 \norm{\Omega_1}_{L^2_\nu}
\end{align*}
and therefore by \eqref{omega12fourierlinearpartsum} and \eqref{Omega11abs} we have
\begin{align*}
    (1-\abs{A_\mu} C_6)\norm{\Omega_1}_{L^2_\nu} &\leq  \abs{A_\mu} C_0 \norm{\Omega_2}_{L^2_\nu} + 2A_\rho \norm{f}_{L^2_\nu}\\
    &\leq |A_\kappa| \abs{A_\mu} C_0 C_9 \norm{\Omega_1}_{L^2_\nu} + 2A_\rho \norm{f}_{L^2_\nu} 
\end{align*}
to give us
\begin{align}
    \label{Omega1elltwonu}
    \norm{\Omega_1}_{L^2_\nu} &\leq 2A_\rho C_{11} \norm{f}_{L^2_\nu},\\
    \label{Omega2elltwonu}
    \norm{\Omega_2}_{L^2_\nu} &\leq 2A_\rho \abs{A_\kappa} C_9 C_{11} \norm{f}_{L^2_\nu}
\end{align}
where
\begin{equation}\label{C11}
C_{11} = (1-\abs{A_{\mu}}C_{6}-\abs{A_{\kappa}}\abs{A_{\mu}}C_{0}C_{9})^{-1} \to (1- \abs{A_\kappa} \abs{A_\mu} )\inv \quad \text{as} \quad \norm{f}_{\fzeronenu} + \norm{f}_{\foneonenu} \to 0.
\end{equation}
Collecting terms from \eqref{Omega11Hdotnu}, \eqref{Omega12Hdotnu}, \eqref{Omega2Hdotnu}, \eqref{Omega1elltwonu}, and \eqref{Omega2elltwonu} we find
\begin{align}
    \norm{\Omega_1}_{\Hdot} &\leq 2A_\rho C_{12} ( |A_\mu| C_{11} (C_7 + \abs{A_\kappa} (C_0 + C_8) C_{10}) \norm{f}_{L^2_\nu} + \norm{f}_{\Hdot})\nonumber\\
    \label{Omega1Hdot}
    &\eqdef 2A_\rho C_{12} (C_{13} \norm{f}_{L^2_\nu} + \norm{f}_{\Hdot})\\
    \norm{\Omega_2}_{\Hdot} &\leq 2A_\rho \abs{A_\kappa} \Big((C_9C_{12}C_{13} + C_{10}C_{11}) \norm{f}_{L^2_\nu} + C_9C_{12} \norm{f}_{\Hdot} \Big) \nonumber\\
    \label{Omega2Hdot}
    &\eqdef 2A_\rho \abs{A_\kappa} (C_{14} \norm{f}_{L^2_\nu} + C_9C_{12} \norm{f}_{\Hdot})
\end{align}
for
\begin{align*}
    C_{12} &= \Big( 1- \abs{A_\mu} (C_6 + \abs{A_\kappa} (C_0 + C_8) C_9 )\Big)\inv \to (1-\abs{A_\kappa} \abs{A_\mu} )\inv ,\\
    C_{13} &\lesssim 1+\norm{f}_{\fthreehalvesonenu},\\
    C_{14} &\lesssim 1+ \norm{f}_{\fthreehalvesonenu}
\end{align*}

We see two types of expressions. For the first type of expression, of the form $\norm{\Omega_i}_{\Hdot} \norm{f}_{L^2_\nu}$, we can apply Young's inequality for products and control the $\Hdot$ terms as in \eqref{youngsexample}, e.g. using \eqref{Omega1Hdot}:
\begin{align*}
    \norm{\Omega_1}_{\Hdot} \norm{f}_{L^2_\nu} &\leq  2A_\rho C_{12} (C_{13} \norm{f}_{L^2_\nu}^{2}+ \norm{f}_{\Hdot}\norm{f}_{L^2_\nu})\\
    &\leq \frac{C}{\varepsilon_{n}}\norm{f}_{L^2_\nu}^{2} + \varepsilon_{n}\norm{f}_{\Hdot}^{2}.
\end{align*}
Here $\varepsilon_{n}$ as earlier in \eqref{youngsexample} can always be chosen arbitrarily small. For the second type of expression $\norm{\Omega_i}_{\Hdot} \norm{f}_{\Hdot}$, after applying \eqref{Omega1Hdot} or \eqref{Omega2Hdot} we can control the resulting $\norm{f}_{\Hdot}^2$ term via the linear decay term in the interface equation, see $\sigma_{2}$ below. In terms with $\norm{f}_{L^{2}_{\nu}}$ that contain coefficient of $\norm{f}_{\fthreehalvesonenu}^2$, such as the middle term in \eqref{youngsexample}, we use $\norm{f}_{\fthreehalvesonenu}^2 \leq \norm{f}_{\foneonenu}\norm{f}_{\ftwonenu}$ by \eqref{interpolation}. Collecting terms from above, \eqref{interfaceL2} becomes
\begin{align*}
    \frac{1}{2} \frac{d}{dt}\norm{f}_{L^2_\nu}^2(t) &\leq  (-A_\rho \theta + \nu + A_\rho \sigma_2 + \varepsilon) \norm{f}_{\Hdot}^2  + (R_{1}+ R_{2}\cdot(\|f\|_{\fthreehalvesonenu}+\|f\|_{\ftwonenu})) \norm{f}_{L^2_\nu}^{2}
\end{align*}
where
\begin{align}
    \sigma_2 &\eqdef \sigma_2 (\norm{f}_{\fzeronenu}, \norm{f}_{\foneonenu}) \nonumber\\
    \begin{split}
    \label{sigma2}
    &=  C_{12}(C_6 + C_0 + C_2) \norm{f}_{\foneonenu}\\
    &\hspace{2.5em} + |1+A_\mu| C_{12} \left( 2\abs{A_\kappa}\lambda_4  +  C_5 (\lambda_5  + \abs{A_\kappa}  ( \lambda_7C_9 + \lambda_4 )) \right) \\
    &\hspace{3.5em} + 2\pi\abs{A_\kappa}(C_0-1 + C_2)  C_9C_{12} + C_6C_{12} \norm{f}_{\foneonenu} + 2\lambda_5 C_{12} + 2\abs{A_\kappa}\lambda_7 C_9C_{12},
    \end{split}
\end{align}
in which $\varepsilon = \varepsilon(\norm{f_{0}}_{\fzerone}, \norm{f_{0}}_{\foneone})$ is arbitrarily small and $R_{i}=  R_{i}(\norm{f_0}_{\fzeronenu}, \norm{f_0}_{\foneonenu})$ for $i=1,2$ is bounded for medium sized initial data. Again, recall that $\norm{f}_{\fthreehalvesonenu}^2 \leq \norm{f}_{\foneonenu}\norm{f}_{\ftwonenu}$ by \eqref{interpolation}. Hence, by \eqref{maininequality}, $\|f\|_{\foneonenu}(t)$ and $\|f\|_{\ftwonenu}(t)$ are $L^{1}$ functions in time on $[0,T]$ for any $T>0$, and hence, so is $\|f\|_{\fthreehalvesonenu}(t)$ with $L^{1}$ norm bounded by initial data. By Gronwall's inequality, we obtain \eqref{L2inequality}.

\subsection{Sobolev Space Estimates}
In this section, we will prove an evolution estimate for a subcritical Sobolev norm $H^{\frac{3}{2}+\epsilon}$ of the interface 
\begin{align*}
        \frac{1}{2} \frac{d}{dt}\norm{f}_{\dot H^{\frac{3}{2} + \epsilon}}^2(t) =- A_\rho \int_\R  \abs{\xi}^{4+2\epsilon} \abs{\hat f(\xi)}^2 \left(1- \frac{A_\kappa(1-A_\mu)}{e^{2h_2\abs{\xi}} - A_\kappa A_\mu}\right) \; d\xi + \langle \hat f, \text{h.o.t.} \rangle_{H^{\frac{3}{2} + \epsilon}}.
\end{align*}
The weighted triangle inequality
\begin{align*}
\abs{\sum_{j=1}^{n} a_{j}}^{1+\epsilon} \leq n^{\epsilon}\sum_{j=1}^{n} \abs{a_{j}}^{1+\epsilon}
\end{align*}
allows us to use the constants from prior sections with the convention that $C_{k,\epsilon}$ is $C_k$ adjusted for the small $\epsilon$ weight from the weighted triangle inequality. All of the $C_{k,\epsilon}$ are bounded and $C_{k,\epsilon} \rightarrow C_{k}$ as $\epsilon \rightarrow 0$.
By \eqref{omega2fourierexpansion} we have
\begin{align*}
        \norm{\Omega_2}_{\dot H^{2+\epsilon}} &= \norm{\omega_2}_{\dot H^{1 + \epsilon}} \\
        &\leq \abs{A_\kappa } \Big( \frac{C_{2,\epsilon}}{\norm{f}_{\fzerone}} \norm{\omega_1}_{\fzerone} \norm{f}_{\dot H^{1+\epsilon}} + C_{0,\epsilon} \norm{\Omega_1}_{\dot H^{2+\epsilon}} \Big).
\end{align*}
where 
\begin{equation*}C_{0,\epsilon}= \sum_{n = 0}^\infty \frac{n^{n}(n+1)^{\epsilon}}{e^nn!} \left(\frac{\norm{f}_{\fzeronenu}}{h_2}\right)^n, \qquad C_{2,\epsilon} = \sum_{n = 1}^\infty  \frac{n^{n+1}(n+1)^{\epsilon}}{e^nn!} \left(\frac{\norm{f}_{\fzeronenu}}{h_2}\right)^n
\end{equation*}
Similarly, by \eqref{omega11est} and \eqref{omega12calc}
\begin{align*}
\norm{\Omega_1}_{\dot H^{2+\epsilon}} &\leq \abs{A_\mu} \Big( 2\frac{1 + \norm{f}_{\foneone}^2}{(1-\norm{f}_{\foneone}^2)^2} \norm{\omega_1}_{\fzerone} \norm{f}_{\dot H^{2+\epsilon}} + 2C_{6,\epsilon} \norm{\Omega_1}_{\dot H^{2+\epsilon}} + C_{0,\epsilon} (1+\norm{f}_{\foneone}) \norm{\Omega_2}_{\dot H^{2+\epsilon}}\\
& \hspace{5em} + \frac{C_{2,\epsilon}}{\norm{f}_{\fzerone}}(1+\norm{f}_{\foneone}) \norm{\omega_2}_{\fzerone} \norm{f}_{\dot H^{1+\epsilon}} + C_{0,\epsilon} \norm{\omega_2}_{\fzerone} \norm{f}_{\dot H^{2+\epsilon}}\Big) + 2A_\rho \norm{f}_{\dot H^{2+\epsilon}}
    \end{align*}
from these we derive
    \begin{align}
        \label{Omega1H2}
        \norm{\Omega_1}_{\dot H^{2+\epsilon}} \leq 2A_\rho C_{1,\epsilon} (C_{15,\epsilon} \norm{f}_{\dot H^{1+\epsilon}} + C_{16,\epsilon} \norm{f}_{\dot H^{2+\epsilon}})
    \end{align}
where for example, we recover the $\epsilon$ adjusted constant
    \[
        C_{1,\epsilon} = \Big(1 - 2\abs{A_\mu}C_{6,\epsilon} - \abs{A_\mu}\abs{A_\kappa} C_{0,\epsilon}^2 (1+\norm{f}_{\foneone}) \Big)\inv
    \]
and $\lim_{\epsilon\rightarrow 0}C_{16,\epsilon} < C_{3} $. This in turn implies
    \begin{align}
        \label{Omega2H2}
        \norm{\Omega_2}_{\dot H^{2+\epsilon}} \leq 2A_\rho (C_{17,\epsilon}\norm{f}_{\dot H^{1+\epsilon}} + C_{18,\epsilon} \norm{f}_{\dot H^{2+\epsilon}}).
    \end{align}
where $C_{18,\epsilon} = C_{0,\epsilon}C_{1,\epsilon} C_{16,\epsilon}$ which as $\epsilon \rightarrow 0$ approaches a value strictly less than $C_{1}C_{4}$.

Using \eqref{eq: I_2}, we have that
\begin{align*}
\langle \hat f, \hat I_2 \rangle_{\dot H^{\frac{3}{2}+\epsilon}} &\leq \pi \int_{\mathbb{R}} \abs{\xi}^{3+2\epsilon}\abs{\hat{f}(\xi)} \sum_{n=0}^{\infty}(\ast^{2n+2}\abs{\widehat{\partial_\alpha f}} \ast (\abs{\cdot}\abs{\Omega_{1}}))(\xi) d\xi\\
&\leq \pi \norm{f}_{\dot{H}^{2+\epsilon}} \sum_{n=0}^{\infty}\bnorm{\ast^{2n+2}\abs{\widehat{\partial_\alpha f}} \ast \abs{\cdot}\abs{\Omega_{1}}}_{\dot{H}^{1+\epsilon}}
\end{align*}
Applying the weighted triangle inequality and then using \eqref{Omega1H2} and \eqref{Omega2H2} we obtain
\begin{align*}
  \langle \hat f, \hat I_2 \rangle_{\dot H^{\frac{3}{2} + \epsilon}} &\leq \pi \norm{f}_{\dot H^{2+\epsilon}}\sum_{n=0}^\infty  (2n+3)^{\epsilon}\Big((2n+2) \norm{f}_{\foneone}^{2n+1} \norm{\Omega_1}_{\foneone} \norm{f}_{\dot H^{2+\epsilon}} + \norm{f}_{\foneone}^{2n+2} \norm{\Omega_1}_{\dot H^{2+\epsilon}}\Big)
\end{align*}
Similarly, making use of the fact that the $\norm{N(\Omega_i)}_{H^{2 + \epsilon}}$ can be bounded in the same way as \eqref{Omega1H2} or \eqref{Omega2H2}
\begin{align*}
     \langle \hat f, \hat I_4 \rangle_{\dot H^{\frac{3}{2} + \epsilon}} &\leq \pi \norm{f}_{\dot H^{2+\epsilon}} \Big( C_{0,\epsilon} (\norm{f}_{\foneonenu} \norm{\Omega_2}_{\dot H^{2+\epsilon}} + \norm{\omega_2}_{\fzerone} \norm{f}_{\dot H^{2+\epsilon}})\\
     &\hspace{50mm}+ \frac{C_{2,\epsilon}}{\norm{f}_{\fzerone}} \norm{f}_{\foneone} \norm{\omega_2}_{\fzerone} \norm{ f}_{\dot H^{1+\epsilon}}\Big)\\
     \langle \hat f, \hat N_0 \rangle_{\dot H^{\frac{3}{2} + \epsilon}} &\leq \frac{1}{2} \norm{f}_{\dot H^{2+\epsilon}} (2n+3)^\epsilon \Big((2n+2) \norm{f}_{\foneone}^{2n+1} \norm{f}_{\foneone} \norm{\Omega_1}_{\dot H^{1+\epsilon}} + \norm{f}_{\foneone}^{2n+2} \norm{\Omega_1}_{\dot H^{2+\epsilon}} \Big)\\
     \langle \hat f, \hat N_1 \rangle_{\dot H^{\frac{3}{2} + \epsilon}} &\leq \frac{1}{2} \norm{f}_{\dot H^{2+\epsilon}} \norm{N( \Omega_2)}_{\dot H^{2+\epsilon}}\\
     \langle \hat f, \hat N_2 \rangle_{\dot H^{\frac{3}{2} + \epsilon}} &\leq \frac{1}{2} \bnorm{\frac{A_\kappa}{e^{2h_2 \abs \xi} - A_\kappa A_\mu} }_{L^\infty} \norm{f}_{\dot H^{2+\epsilon}} \norm{N( \Omega_1)}_{\dot H^{2+\epsilon}}\\
     \langle \hat f, \hat N_3 \rangle_{\dot H^{\frac{3}{2} + \epsilon}} &\leq \frac{1}{2} C_5 \norm{f}_{\dot H^{2+\epsilon}} \norm{N( \Omega_2)}_{\dot H^{2+\epsilon}}\\
     \langle \hat f, \hat N_4 \rangle_{\dot H^{\frac{3}{2} + \epsilon}} &\leq \pi \norm{f}_{\dot H^{2+\epsilon}} \Big((C_{0,\epsilon}-1) \norm{\Omega_2}_{\dot H^{2+\epsilon}} + \frac{C_{2,\epsilon}}{\norm{f}_{\fzerone}} \norm{f}_{\foneone} \norm{\Omega_2}_{\dot H^{1+\epsilon}} \Big)\\
    \langle \hat f, N(\widehat{\partial_\alpha \Omega_1}) \rangle_{\dot H^{\frac{3}{2} + \epsilon}} &\leq \norm{f}_{\dot H^{2+\epsilon}} \norm{N( \Omega_1)}_{\dot H^{2+\epsilon}}
\end{align*}
    
    Combining terms via Young's inequality for products we have
    \begin{align}
    \label{sobolevestimate}
    \frac{1}{2} \frac{d}{dt}\norm{f}_{\dot H^{\frac{3}{2} +\epsilon} }^2(t) &\leq  -A_\rho( \theta - \tilde \sigma_{\epsilon}) \norm{f}_{\dot H^{2+\epsilon}}^2  + S(\norm{f}_{\fzerone}, \norm{f}_{\foneone}) \norm{f}_{\dot H^{1+\epsilon}}^2
    \end{align}
    In which, as $\epsilon \to 0$
\begin{align}
    \tilde \sigma_{\epsilon} < \sigma_1
\end{align}
and $S$ is a rational function of $\norm{f}_{\fzerone}$ and $\norm{f}_{\foneone}$ that vanishes at 0. So for every finite $T > 0$ we can bound $\frac{d}{dt}\norm{f}_{\dot H^{\frac{3}{2}+\epsilon}}(t)$ on $[0,T]$ giving the desired derivative in time bound.

\section{Proof of Theorem}
\label{sec:existence}
We argue similarly to \cite{GGPS} for the proof of existence and uniqueness of solutions to \eqref{eq: interface}. Uniqueness of solutions is proven at the level of $\fzerone$ and follows exactly as in \cite{GGPS}. It yields an inequality of the type
\begin{align}\label{uniqueness}
\frac{d}{dt}\norm{f-g}_{\fzerone} &\leq -c_{1}\norm{f-g}_{\foneone} + c_{2}\norm{f-g}_{\fzerone}
\end{align}
 for two solutions $f,g$ of \eqref{eq: interface} and for some constants $c_{i}= c_{i}(\norm{f_{0}}_{\fzerone},\norm{f_{0}}_{\foneone},\norm{g_{0}}_{\fzerone},\norm{g_{0}}_{\foneone})>0$. Note that the key difference with \cite{GGPS} is that for some terms, the difference of solutions occurs at the level of $\fzerone$ which, unlike the Muskat problem without a permeability jump, can not be absorbed into the decay term.
 
 Consider the mollified system with initial data $f_{0}^{\varepsilon} = \varphi^{\varepsilon} \ast f_{0}$ and the evolution equation
 \begin{equation}\label{existenceequation}
      \partial_t f^{\epsilon} =  \mathcal{L}(\varphi^{\varepsilon}\ast\varphi^{\varepsilon}\ast f^{\epsilon}) + \varphi^{\varepsilon}\ast\mathcal{N}(\varphi^{\varepsilon}\ast\varphi^{\varepsilon}\ast f^{\epsilon}, \omega_{1}^{\varepsilon},\omega_{2}^{\varepsilon})
 \end{equation}
 where
 \begin{align*}
    \widehat{\mathcal{L}g}(\xi) = - A_\rho |\xi| \hat g(\xi) \left(1 - \frac{A_\kappa(1-A_\mu) }{e^{2h_2\abs{\xi}} - A_\kappa A_\mu}\right)
 \end{align*}
 and $\mathcal{N}$ is the remaining nonlinear terms from \eqref{interface_neat}. Here $\omega_{i}^{\varepsilon} = \partial_{\alpha}\Omega_i^{\varepsilon}$ are given by the mollified \eqref{omega1equation}, \eqref{omega2equation}, \eqref{Omega1}, and \eqref{Omega2} where $f$ is replaced by the mollified $\varphi^{\varepsilon}\ast\varphi^{\varepsilon}\ast f^{\varepsilon}$. Given initial data of medium size from Theorem \ref{MainTheorem} and because $H^{\frac{3}{2} + \epsilon} \hookrightarrow \fzerone\cap\foneone$, the mollified system satisfies the hypothesis to apply Picard's theorem. We get a local solution $f^{\varepsilon} \in C([0,T_{\varepsilon}); H^{\frac{3}{2} + \epsilon})$. Next, we can reproduce the analogous estimates for the medium size condition on $f_{0}$ for $0\leq s \leq 1$:
\begin{equation}\label{existenceinequality}
\norm{f^{\varepsilon}}_{\mathcal{F}^{s,1}_{\nu}}(t) +(A_\rho\theta  - A_{\rho} \sigma_{s} -\nu)\int_{0}^{t} \norm{\varphi^{\varepsilon}\ast\varphi^{\varepsilon}\ast f^{\varepsilon}}_{\mathcal{F}^{s+1,1}_{\nu}}(s)\; ds \leq \|f_{0}\|_{\mathcal{F}^{s,1}_{0}}
\end{equation}
and
\begin{equation}\label{eq:epsilonL2}
  \norm{f^{\varepsilon}}_{L^2_\nu}^2 (t) \leq \norm{f_0}_{L^2}^2 \cdot \exp(R(\norm{f_0}_{\fzerone}, \norm{f_0}_{\foneone}))
\end{equation}

Due to the exponential weight in $L^{2}_{\nu}$, the estimate \eqref{eq:epsilonL2} implies that $\|f^{\varepsilon}\|_{H^{\frac{3}{2}+\epsilon}}(t) \leq  C_{\epsilon}(t)\norm{f_0}_{L^2}^2 \cdot \exp(R(\norm{f_0}_{\fzerone}, \norm{f_0}_{\foneone}))$ for $t>0$ where $C_{\epsilon}(t)$ is a bounded decreasing constant in $t>0$. Moreover, it can be seen combining the proof of \eqref{L2inequality} and \eqref{sobolevestimate} that
$$\frac{d}{dt}\|f^{\varepsilon}\|_{H^{\frac{3}{2}+\epsilon}}^{2} \leq G(\|f^{\varepsilon}\|_{H^{\frac{3}{2}+\epsilon}}, \|f_{0}\|_{\fzerone}, \|f_{0}\|_{\foneone})$$
for a continuous function $G$. Hence, the local solution can be extended to $C([0,T]; H^{\frac{3}{2}+\epsilon})$ for any $T>0$.

By \eqref{uniqueness} and following the argument in \cite{GGPS}, the sequence $f^{\varepsilon_{n}}$ is shown to be Cauchy in $L^{\infty}([0,T];\fzerone)$ for any $\varepsilon_{n} \rightarrow 0$. The main idea is that by the argument from uniqueness, we have
\begin{align*}
\frac{d}{dt}\norm{f^{\varepsilon} - f^{\varepsilon '}}_{\fzerone} &\leq -c_{1}\norm{ \varphi^{\varepsilon} \ast \varphi^{\varepsilon} \ast f^{\varepsilon} -  \varphi^{\varepsilon'} \ast \varphi^{\varepsilon'} \ast f^{\varepsilon '}}_{\foneone} + c_{2}\norm{ \varphi^{\varepsilon} \ast \varphi^{\varepsilon} \ast f^{\varepsilon} -  \varphi^{\varepsilon'} \ast \varphi^{\varepsilon'} \ast f^{\varepsilon '}}_{\fzerone}
\end{align*}
and hence
\begin{align*}
\norm{f^{\varepsilon} - f^{\varepsilon '}}_{\fzerone}(t) \leq \norm{\varphi^{\varepsilon} \ast f_{0} - \varphi^{\varepsilon'} \ast f_{0}}_{\fzerone} + c_{2} \int_{0}^{t}\norm{ \varphi^{\varepsilon} \ast \varphi^{\varepsilon} \ast f^{\varepsilon} -  \varphi^{\varepsilon'} \ast \varphi^{\varepsilon'} \ast f^{\varepsilon '}}_{\fzerone}(s) ds.
\end{align*}
Using the Mean Value Theorem in the mollifiers in the Fourier variables and assuming $\varepsilon \geq \varepsilon'$
\begin{align*}
    \norm{\varphi^{\varepsilon} \ast f_{0} - \varphi^{\varepsilon'} \ast f_{0}}_{\fzerone} \leq C\norm{f_{0}}_{\foneone}\varepsilon^{\frac{1}{2}}
\end{align*}
and 
\begin{align*}
   \norm{ \varphi^{\varepsilon} \ast \varphi^{\varepsilon} \ast f^{\varepsilon} -  \varphi^{\varepsilon'} \ast \varphi^{\varepsilon'} \ast f^{\varepsilon '}}_{\fzerone} &\leq  \norm{ \varphi^{\varepsilon} \ast \varphi^{\varepsilon} \ast f^{\varepsilon} -  \varphi^{\varepsilon} \ast \varphi^{\varepsilon} \ast f^{\varepsilon '}}_{\fzerone} +  \norm{ \varphi^{\varepsilon} \ast \varphi^{\varepsilon} \ast f^{\varepsilon'} -  \varphi^{\varepsilon'} \ast \varphi^{\varepsilon'} \ast f^{\varepsilon '}}_{\fzerone}\\
   &\leq \norm{ f^{\varepsilon} -  f^{\varepsilon '}}_{\fzerone} +  C\norm{ f^{\varepsilon '}}_{\foneone}\varepsilon^{\frac{1}{2}}\\
     &\leq \norm{ f^{\varepsilon} -  f^{\varepsilon '}}_{\fzerone} + C\norm{f_{0}}_{\foneone}\varepsilon^{\frac{1}{2}}.
\end{align*}
Thus,
\begin{align}\label{cauchy}
\norm{f^{\varepsilon} - f^{\varepsilon '}}_{\fzerone}(t) \leq C(1+c_{2}t)\norm{f_{0}}_{\foneone}\varepsilon^{\frac{1}{2}} + c_{2} \int_{0}^{t}\norm{ f^{\varepsilon} -  f^{\varepsilon '}}_{\fzerone}(s)  ds.
\end{align}
Gronwall's inequality finally yields
\begin{align*}
\norm{f^{\varepsilon} - f^{\varepsilon '}}_{\fzerone}(t) \leq C(1+c_{2}t)e^{c_{2}t}\norm{f_{0}}_{\foneone}\varepsilon^{\frac{1}{2}}.
\end{align*}
Hence, there exists a limit $\ f^{\varepsilon}\rightarrow f$ in $L^{\infty}([0,T];\fzerone)$. Hence, we can obtain pointwise almost everywhere convergence of a subsequence $\hat f^{\varepsilon_{n}}(\xi,t)$ and $\hat{\varphi}^{\varepsilon_{n}}(\xi,t)^{2}\hat f^{\varepsilon_{n}}(\xi,t)$ to $\hat{f}(\xi,t)$. Thus, Fatou's lemma applied to \eqref{existenceinequality} allows us to conclude that the limit $f$ indeed satisfies the inequality \eqref{maininequality} for $s=0$ and $s=1$.

Interpolation \eqref{interpolation} with \eqref{cauchy} and \eqref{existenceinequality} yields strong convergence of $\varphi^{\varepsilon} \ast \varphi^{\varepsilon} \ast f^{\varepsilon}$ to $f$ in $L^{2}([0,T];\foneone)$. 
Finally, we can now take limits in \eqref{existenceequation}, we get the limiting function $f$ as the unique solution to \eqref{eq: interface}.

\subsubsection*{Acknowledgments}
The authors would like to thank the University of Michigan Research Experience for Undergraduates (REU) program for their support of Nikhil Shankar during the early stages of this project. NP was partially supported by AMS-Simons Travel Grants, which are administered by the American Mathematical Society with support from the Simons Foundation.

\printbibliography

	\vspace{0.1in}
	\begin{tabular}{ll}
		\textbf{Neel Patel} \\
		{\small Instituto de Ciencias Matem\'aticas, ICMAT} \\ 
		{\small Madrid, Spain} \\ 
		{\small Email: neeljp@umich.edu}\\
		\small{\textbf{Former Affiliation:} }\\
		{\small Department of Mathematics}\\
		{\small University of Michigan} \\ 
		{\small Ann Arbor, Michigan, 48105}
	\end{tabular}\qquad
	\begin{tabular}{ll}
		\textbf{Nikhil Shankar} \\
		{\small Department of Mathematics}\\
		{\small Duke University} \\ 
		{\small Physics Building 274E}\\
		{\small 120 Science Dr, Durham, NC 27710}\\ 
		{\small Email: nikhil.shankar@duke.edu}\\[2.45em]
	\end{tabular}

\end{document}